\newcommand{\lyxmathsym}[1]{\ifmmode\begingroup\def\b@ld{bold}
  \text{\ifx\math@version\b@ld\bfseries\fi#1}\endgroup\else#1\fi}
\numberwithin{equation}{section}
\theoremstyle{plain}
\newtheorem{thm}{\protect\theoremname}[section]
\theoremstyle{definition}
\newtheorem{defn}[thm]{\protect\definitionname}
\theoremstyle{plain}
\newtheorem{lem}[thm]{\protect\lemmaname}
\providecommand{\definitionname}{Definition}
\providecommand{\lemmaname}{Lemma}
\providecommand{\theoremname}{Theorem}
\begin{document}
\title[{\footnotesize{}essential $\mathcal{F}$ sets and mixing properties} ]{essential $\mathcal{F}$ sets and mixing properties}
\author{pintu debnath and sayan goswami}
\address{{\Large{}Pintu Debnath, Department of Mathematics, Basirhat College,
Basirhat -743412, North 24th parganas, West Bengal, India.}}
\email{{\Large{}pintumath1989@gmail.com}}
\address{{\Large{}Sayan Goswami, Department of Mathematics, University of Kalyani,
Kalyani-741235, Nadia, West Bengal, India.}}
\email{{\Large{}sayan92m@gmail.com}}
\keywords{{\Large{}Stone-\v{C}ech compactification, closed subsets of $\beta\mathbb{N}$,
Ramsay families and Mixing.}}
\begin{abstract}
{\Large{}There is a long history of studying Ramsey theory using the
algebraic structure of Stone-\v{C}ech compactification of discrete
semigroup. It has been shown that various Ramsey theoretic structures
are contained in different algebraic large sets. In this article we
will study elementary characterization of essential $\mathcal{F}$
sets. It is known that for an $IP^{\star}$ sets $A$ in $\left(\mathbb{N},+\right)$
and sequence $\langle x_{n}\rangle_{n=1}^{\infty}$ in $\mathbb{N}$.
Then there exists sum subsystem $\langle y_{n}\rangle_{n=1}^{\infty}$
of $\langle x_{n}\rangle_{n=1}^{\infty}$ such that $FS\left(\langle y_{n}\rangle_{n=1}^{\infty}\right)\cup FP\left(\langle y_{n}\rangle_{n=1}^{\infty}\right)\subseteq A$.
In present work, We shall prove some analogous result for essential
$\mathcal{F}^{\star}$ set for some particular type of sequences.
It is well known that weak mixing( central$^{\star}$ mixing or $D^{\star}$
mixing ) implies all order weak mixing. In this article we will prove
essential $\mathcal{F}^{\star}$ mixing implies all order mixing.}{\Large\par}
\end{abstract}

\maketitle

\section{{\Large{}introduction}}

{\Large{}The Stone-\v{C}ech compactification of the set of natural
number $\mathbb{N}$ denoted by $\beta\mathbb{N}$ can be imposed
with the two operations $'+'$ and $'\cdot'$ which is an extension
of those operations on $\mathbb{N}$. The members of $\beta\mathbb{N}$
are the ultrafilters which are the subsets of the power sets of $\mathbb{N}$.
It can be shown that $\left(\beta\mathbb{N},+\right)$ and $\left(\beta\mathbb{N},\cdot\right)$
are two semigroups and they contains smallest two-sided ideals denoted
by $K\left(\beta\mathbb{N}\right)$. The ultrafilters $p\in\left(K\left(\beta\mathbb{N}\right),+\right)$
is called additively minimal ultrafilter and $p\in\left(K\left(\beta\mathbb{N}\right),\cdot\right)$
is multiplicatively minimal.}{\Large\par}

{\Large{}It can be shown that there is an one to one correspondence
between closed subsets of $\beta\mathbb{N}$ and certain collections
of subsets of $\mathbb{N}$. This certain family of subsets is called
families denoted by $\mathcal{F}$ and that closed subset will be
denoted by $\beta\mathcal{F}$. Any idempotent ultrafilter $p\in\beta\mathcal{F}$,
if exists, is called essential idempotent and any $A\in p$ is called
essential $\mathcal{F}$ set. It can be shown that if $\mathcal{F}$
is a shift invariant family, $\beta\mathcal{F}$ is an ideal of $\beta\mathbb{N}$.
An essential $\mathcal{F}^{\star}$ set is the set which belongs to
every idempotent in $\beta\mathcal{F}$. }{\Large\par}

{\Large{}There is an elementary characterization of the central sets,
$C$ sets, $D$ sets etc. which shows that those sets contains a chain
of sets with some $IP$ type property. In this article we will characterize
the essential $\mathcal{F}$-sets in terms of $\mathcal{F}$sets,
i.e. the sets belongs to the $\mathcal{F}$-family.}{\Large\par}

{\Large{}The famous Hindman's theorem says for any $A\in p$, where
$p$ is an idempotent ultrafilter contains $FS\langle x_{n}\rangle_{n=1}^{\infty}$,
the all possible finite sum of some sequence $\langle x_{n}\rangle_{n=1}^{\infty}$.
But if we partition $\mathbb{N}$ into finitely many sets, any partition
may not contain simultaneously finite sum and finite product of a
sequence. it was proved that any $IP^{\star}$, central$^{\star}$
or $C^{\star}$ sets will contain these type of configuration of some
sequences. In this article we will generalize this statement for essential
$\mathcal{F}^{\star}$ sets.}{\Large\par}

{\Large{}For a measure preserving system $\left(X,\mathcal{B},\mu,T\right)$
the study of mixing along essential $\mathcal{F}^{\star}$ sets can
be found in\cite{key-6}. Different type of mixing gives comes from
different large sets associated with the algebraic structure of $\beta\mathbb{N}$,
such as the recurrence of strong mixing comes from co finite sets,
weak mixing comes from $D^{\star}$ sets, mild mixing comes from $IP^{\star}$
sets etc. It was proved due to H.Furstenberg in\cite{key-8} that
weak mixing implies all order weak mixing. H.Furstenberg in\cite{key-8}
proved mild mixing implies all order mild mixing. In this article
we will show that any essential $\mathcal{F}^{*}$ mixing implies
all order essential $\mathcal{F}^{\star}$ mixing.}{\Large\par}

\section{{\Large{}preliminaries}}

{\Large{}Let $S$ be a discrete semigroup. In\cite{key-6}, the author
studied a detailed analysis of the closed ideals in $(\beta S,\cdot)$.
In this article we will consider $\left(S,\cdot\right)=\left(\mathbb{N},+\right)$.}{\Large\par}

{\Large{}The upward hereditary families $\mathcal{F\subseteq P}\left(\mathbb{N}\right)$
which possesses the Ramsey property and the closed subsets of $\beta\mathbb{N}$
are in one to one correspondence in nature. A collection $\mathcal{F\subseteq P}\left(\mathbb{N}\right)$
is upward hereditary if whenever $A\in\mathcal{F}$ and $A\subseteq B\subseteq\mathbb{N}$
then it follows that $B\in\mathcal{F}$. A nonempty and upward hereditary
collection $\mathcal{F\subseteq P}\left(\mathbb{N}\right)$ will be
called a family. If $\mathcal{F}$ is a family, the dual family $\mathcal{F}^{*}$
is given by,
\[
\mathcal{F}^{\star}=\{E\subseteq\mathbb{N}:\forall A\in\mathcal{F},E\cap A\neq\emptyset.\}
\]
A family $\mathcal{F}$ possesses the Ramsey property if whenever
$A\in\mathcal{F}$ and $A=A_{1}\cup A_{2}$ there is some $i\in\left\{ 1,2\right\} $
such that $A_{i}\in\mathcal{F}$.}{\Large\par}

{\Large{}There are many families $\mathcal{F}$ with Ramsay property.}{\Large\par}
\begin{itemize}
\item {\Large{}The infinite sets,}{\Large\par}
\item {\Large{}The piecewise syndetic sets,}{\Large\par}
\item {\Large{}The sets of positive upper density,}{\Large\par}
\item {\Large{}The set containing arbitrary large arithmetic progression,}{\Large\par}
\item {\Large{}The set with property that $\sum_{n\in A}\frac{1}{n}=\infty$,}{\Large\par}
\item {\Large{}The $J$ sets,}{\Large\par}
\item {\Large{}The $IP$ sets.}{\Large\par}
\end{itemize}
{\Large{}It will be easy to check that the family$\mathcal{F}$ has
the Ramsey property iff the family $\mathcal{F}^{*}$ is a filter.
For a family $\mathcal{F}$ with the Ramsey property, let $\beta(\mathcal{F})=\{p\in\beta\mathbb{N}:p\subseteq\mathcal{F}\}$.We
state and prove the following well known theorem:}{\Large\par}
\begin{thm}
{\Large{}\label{Theorem 1} For every family $\mathcal{F\subseteq P}(\mathbb{N})$
with the Ramsey property, $\beta(\mathcal{F})\subseteq\beta\mathbb{N}$
is closed and if $\mathcal{F}$ is translation invariant then $\beta(\mathcal{F})$
is left ideal.}{\Large\par}
\end{thm}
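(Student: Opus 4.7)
The statement splits into two independent claims, and I would tackle both by working with the basic clopens $\overline{A}=\{p\in\beta\mathbb{N}:A\in p\}$ that generate the topology on $\beta\mathbb{N}$.

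For closedness, I would argue directly that the complement is open: if $p\in\beta\mathbb{N}\setminus\beta(\mathcal{F})$ then $p\not\subseteq\mathcal{F}$, so some $A\in p$ fails to lie in $\mathcal{F}$, and the basic clopen neighborhood $\overline{A}$ of $p$ misses $\beta(\mathcal{F})$, since each $q\in\overline{A}$ contains $A\notin\mathcal{F}$ and therefore is not a subset of $\mathcal{F}$. An equivalent and slightly more structural packaging that I would also record: because $\mathcal{F}$ is upward hereditary one has $\mathcal{F}^{\star}=\{E\subseteq\mathbb{N}:\mathbb{N}\setminus E\notin\mathcal{F}\}$, and for any ultrafilter $p$ the condition $p\subseteq\mathcal{F}$ is equivalent to $\mathcal{F}^{\star}\subseteq p$, so $\beta(\mathcal{F})=\bigcap_{E\in\mathcal{F}^{\star}}\overline{E}$ is closed as an intersection of closed sets.

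For the left-ideal claim, I would read translation invariance as $B\in\mathcal{F}\Rightarrow n+B\in\mathcal{F}$ for every $n\in\mathbb{N}$. Fix $q\in\beta(\mathcal{F})$ and an arbitrary $p\in\beta\mathbb{N}$, and let $A\in p+q$. By the standard product formula, $\{n\in\mathbb{N}:-n+A\in q\}\in p$, where $-n+A=\{m:n+m\in A\}$; in particular this set is nonempty, and I may fix some $n$ in it. Then $-n+A\in q\subseteq\mathcal{F}$, translation invariance gives $n+(-n+A)\in\mathcal{F}$, and since $n+(-n+A)\subseteq A$, upward heredity forces $A\in\mathcal{F}$. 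This shows $p+q\in\beta(\mathcal{F})$, as required.

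The main subtlety I anticipate is precisely this last link: $n+(-n+A)$ is the tail of $A$ above $n$, not $A$ itself, so translation invariance by itself does not give $A\in\mathcal{F}$; it is upward heredity (one of the two defining properties of a family) that bridges the gap. A minor orientation issue is also present, in that \emph{translation invariant} must be taken in the version compatible with the preimages $-n+A$ that appear in the product formula for $p+q$; under the opposite convention the argument is entirely symmetric and equally short.
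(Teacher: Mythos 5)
Your proof is correct. The closedness argument coincides with the paper's (a point outside $\beta(\mathcal{F})$ has a basic clopen neighborhood $\overline{A}$ with $A\notin\mathcal{F}$ that misses $\beta(\mathcal{F})$); your alternative packaging $\beta(\mathcal{F})=\bigcap_{E\in\mathcal{F}^{\star}}\overline{E}$ is a valid bonus. For the left-ideal claim your route differs in two ways worth noting. The paper only verifies $n+\beta(\mathcal{F})\subseteq\beta(\mathcal{F})$ for $n\in\mathbb{N}$ and declares that this ``suffices''; that reduction is legitimate, but it silently uses the closedness just proved together with the continuity of $\rho_{q}$ and the density of $\mathbb{N}$ in $\beta\mathbb{N}$. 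You instead take an arbitrary $p\in\beta\mathbb{N}$ and argue purely combinatorially from the product formula, which makes the proof self-contained and avoids the topological appeal. Second, you correctly isolate the step the paper elides: from $-n+A\in\mathcal{F}$, translation invariance yields $n+(-n+A)\in\mathcal{F}$, and this set is only the portion of $A$ above $n$, so one genuinely needs upward heredity to conclude $A\in\mathcal{F}$; the paper's sentence ``Since $\mathcal{F}$ is translation invariant, we have $A\in\mathcal{F}$'' hides exactly this. Your version is the more careful one, at the cost of a few extra lines.
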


\begin{proof}
{\Large{}Let $q\in\beta\mathbb{N}\setminus\beta\left(\mathcal{F}\right)$.
Then there is $E\subset\mathbb{N}$with $E\notin\mathcal{F}$ and
$E\in q$. Now $\overline{E}$ is a neighborhood of $q$ with the
property that $\overline{E}\subset\beta\mathbb{N}\setminus\beta\left(\mathcal{F}\right)$,
This implies that $\beta\mathbb{N}\setminus\beta\left(\mathcal{F}\right)$
is open, hence $\beta\left(\mathcal{F}\right)$ is closed in $\beta\mathbb{N}$.
In order to prove that $\beta\left(\mathcal{F}\right)$ is a closed
ideal of $\beta\mathbb{N}$, it suffices to prove that $n+\beta\left(\mathcal{F}\right)\subset\beta\left(\mathcal{F}\right)$
for every $n\in\mathbb{N}$. Let $p\in\beta\left(\mathcal{F}\right)$
and $A\in n+p$ implies that $-n+A\in p\subset\mathcal{F}$. Since
$\mathcal{F}$is translation invariant, we have $A\in\mathcal{F}$.
This finishes the proof.}{\Large\par}
\end{proof}
{\Large{}Now for translation invariant $\mathcal{F\subseteq P}(\mathbb{N})$,
$\beta(\mathcal{F})$ having compact subsemigroup in $\beta\mathbb{N}$,
$\beta(\mathcal{F})$ contains atleast one idempotent. From this comment,
we get the following definition:}{\Large\par}
\begin{defn}
{\Large{}Let $\mathcal{F}$ be a translation invariant ramsay family
and $p$ be an idempotent in $\beta(\mathcal{F})$, then each member
of $p$ is called essential $\mathcal{F}$ set. And $A\subset\mathbb{N}$
is called essential $\mathcal{F}^{\star}$ set if $A$ intersects
with all essential $\mathcal{F}$ sets.}{\Large\par}
\end{defn}

{\Large{}Let us abbreviate the family of piecewise syndetic sets as
$\mathcal{PS}$, the family of positive density sets as $\Delta$
and the family of $J$ sets as $\mathcal{J}$.}{\Large\par}

{\Large{}From the above definition together with the abbreviations,
we get quasi central set is an essential $\mathcal{PS}$ set, $D$
set is an essential $\Delta$ set and $C$ set is an essential $\mathcal{J}$set.}{\Large\par}

{\Large{}Let us revisit the algebraic operation on $\beta\mathbb{N}$
in short for our purpose.}{\Large\par}

{\Large{}Identifying the principal ultrafilters with the points of
$\mathbb{N}$ and thus pretending that $\mathbb{N}\subseteq\beta\mathbb{N}$.
Given $A\subseteq\mathbb{N}$ let us set, 
\[
\overline{A}=\{p\in\beta\mathbb{N}:A\in p\}.
\]
 Then the set $\{\overline{A}:A\subseteq\mathbb{N}\}$ is a basis
for a topology on $\beta\mathbb{N}$. The operation $+$ on $\mathbb{N}$
can be extended to the Stone-\v{C}ech compactification $\beta\mathbb{N}$
of $\mathbb{N}$ so that $(\beta\mathbb{N},+)$ is a compact right
topological semigroup (meaning that for any $p\in\beta\mathbb{N}$,
the function $\rho_{p}:\beta\mathbb{N}\rightarrow\beta\mathbb{N}$
defined by $\rho_{p}(q)=q+p$ is continuous) with $\mathbb{N}$ contained
in its topological center (meaning that for any $x\in\mathbb{N}$,
the function $\lambda_{x}:\beta\mathbb{N}\rightarrow\beta\mathbb{N}$
defined by $\lambda_{x}(q)=x+q$ is continuous). Given $p,q\in\beta\mathbb{N}$
and $A\subseteq\mathbb{N}$, $A\in p+q$ if and only if $\{x\in\mathbb{N}:-x+A\in q\}\in p$,
where $-x+A=\{y\in\mathbb{N}:x+y\in A\}$. }{\Large\par}

{\Large{}A nonempty subset $I$ of a semigroup $(T,\cdot)$ is called
a left ideal of $\emph{T}$ if $T\cdot I\subset I$, a right ideal
if $I\cdot T\subset I$, and a two sided ideal (or simply an ideal)
if it is both a left and right ideal. A minimal left ideal is the
left ideal that does not contain any proper left ideal. Similarly,
we can define minimal right ideal and smallest ideal.}{\Large\par}

{\Large{}Any compact Hausdorff right topological semigroup $(T,\cdot)$
has a smallest two sided ideal}{\Large\par}

{\Large{}
\[
\begin{array}{ccc}
K(T) & = & \bigcup\{L:L\text{ is a minimal left ideal of }T\}\\
 & = & \,\,\,\,\,\bigcup\{R:R\text{ is a minimal right ideal of }T\}
\end{array}
\]
}{\Large\par}

{\Large{}Given a minimal left ideal $L$ and a minimal right ideal
$R$, $L\cap R$ is a group, and in particular contains an idempotent.
An idempotent in $K(T)$ is called a minimal idempotent. If $p$ and
$q$ are idempotents in $T$, we write $p\leq q$ if and only if $p\cdot q=q\cdot p=p$.
An idempotent is minimal with respect to this relation if and only
if it is a member of the smallest ideal. }{\Large\par}

\section{{\Large{}elementary characterization of essential $\mathcal{F}$
set}}

{\Large{}In\cite{key-6} established dynamical characterization of
essential $\mathcal{F}$ sets and elementary characterization of essential
$\mathcal{F}$ sets are still unknown. Although elementary characterization
of quasi central sets and $C$ sets are known from \cite{key-10}
and \cite{key-11} respectively. Since quasi central sets and $C$
sets are coming from by the setting of essential $\mathcal{F}$ set
and this fact confines the fact that essential $\mathcal{F}$ sets
might have elementary characterization. In this section we will prove
the supposition that elementary characterization of essential $\mathcal{F}$-sets
could be found exactly the same way what the Hindman did in \cite{key-11}
for $C$ set.}{\Large\par}

{\Large{}Let $\omega$ be the first infinite ordinal and each ordinal
indicates the set of all it's predecessor. In particular, $0=\emptyset,$
for each $n\in\mathbb{N},\:n=\left\{ 0,1,...,n-1\right\} $.}{\Large\par}
\begin{defn}
{\Large{}(a) If $f$ is a function and $dom\left(f\right)=n\in\omega$,
then for all $x$, $f^{\frown}x=f\cup\left\{ \left(n,x\right)\right\} $.}{\Large\par}

{\Large{}(b) Let $T$ be a set functions whose domains are members
of $\omega$. For each $f\in T$, $\mathcal{B}_{f}\left(T\right)=\left\{ x:f^{\frown}x\in T\right\} .$}{\Large\par}
\end{defn}

\begin{lem}
{\Large{}Let $p\in\beta\mathbb{N}$. Then $p$ is an idempotent if
and only if for each $A\in p$ there is a non-empty set $T$ of functions
such that }{\Large\par}
\end{lem}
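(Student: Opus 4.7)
The plan is to prove both directions by imitating the classical tree construction that Hindman used for $C$-sets in \cite{key-11}, but stripped of the $J$-set hypothesis, since here we only need the single property that $p$ is an idempotent. The key algebraic tool is the standard fact that whenever $p + p = p$ and $A \in p$, the set $A^{\star} := \{x \in A : -x + A \in p\}$ lies in $p$, and moreover for every $x \in A^{\star}$ one has $-x + A^{\star} \in p$. This is the engine that drives the whole argument.

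For the forward direction, assume $p$ is idempotent and fix $A \in p$. I would construct $T$ by induction on the length $n = \mathrm{dom}(f)$ of its members. Put $\emptyset \in T$ and declare $\mathcal{B}_{\emptyset}(T) = A^{\star}$, which is in $p$. Having placed $f = \langle x_0, x_1, \ldots, x_{n-1} \rangle$ in $T$ with all the intermediate sets sitting in $p$, define
\[
\mathcal{B}_f(T) \;=\; A^{\star} \,\cap\, \bigcap_{\emptyset \neq F \subseteq n}\Bigl(-\!\!\sum_{i \in F} f(i) + A^{\star}\Bigr),
\]
which is a finite intersection of members of $p$ (by the inductive hypothesis applied to each $f\!\restriction\! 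F$), hence in $p$. Then for each $x \in \mathcal{B}_f(T)$ we adjoin $f^{\frown} x$ to $T$. By construction every element of $T$ encodes a finite initial segment of an FS-type subsystem of $A$, and the collection of branching sets $\mathcal{B}_f(T)$ lies in $p$ at every node, which will match whatever form the theorem's right-hand side takes.

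For the converse, assume the tree condition holds for every $A \in p$; I want $A \in p + p$, equivalently $\{x : -x + A \in p\} \in p$. Given $A$, take the associated $T$ and consider $\mathcal{B}_{\emptyset}(T) \in p$. For each $x \in \mathcal{B}_{\emptyset}(T)$, the node $\langle x\rangle$ lies in $T$ and by the tree hypothesis $\mathcal{B}_{\langle x\rangle}(T) \in p$ and is contained in $-x + A$ (this containment is exactly what the "sum lies in $A$" clause of the tree definition provides at length-two chains). Hence $-x + A \in p$ for every such $x$, so $\mathcal{B}_{\emptyset}(T) \subseteq \{x : -x + A \in p\}$ and the latter lies in $p$. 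This gives $p + p = p$.

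The main obstacle is bookkeeping rather than any deep idea: one has to be careful that the set $\mathcal{B}_f(T)$ chosen at stage $n$ is simultaneously (i) a member of $p$, (ii) consistent with the level-$(n{-}1)$ choices, and (iii) strong enough that every finite sum along a branch lies in $A$. The standard trick of intersecting with translates over \emph{all} nonempty $F \subseteq n$, combined with the self-reproducing property $-x + A^{\star} \in p$ for $x \in A^{\star}$, is what makes this work cleanly; once that is set up, both implications are essentially bookkeeping on the tree.
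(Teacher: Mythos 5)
The paper never actually proves this lemma---it is stated and then invoked as a black box in the characterization theorem that follows, being in substance Hindman's lemma from \cite{key-11} (see also \cite{key-14})---so your argument has to stand on its own, and it essentially does: it is the standard tree construction. In the forward direction, setting $A^{\star}=\{x\in A:-x+A\in p\}$ and admitting $f^{\frown}x$ into $T$ exactly when $x\in A^{\star}\cap\bigcap_{\emptyset\neq F\subseteq n}\bigl(-\sum_{i\in F}f(i)+A^{\star}\bigr)$ works: an easy induction shows every finite sum $\sum_{i\in F}f(i)$ along a branch lies in $A^{\star}$, so each translate in the intersection is in $p$ by the self-reproducing property of $A^{\star}$, which gives condition (2). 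Condition (3), namely $\mathcal{B}_{f^{\frown}x}(T)\subseteq-x+\mathcal{B}_{f}(T)$, is the one clause you wave at instead of checking; it does hold, because the term $-x-\sum_{i\in F}f(i)+A^{\star}$ of $-x+\mathcal{B}_{f}(T)$ reappears in $\mathcal{B}_{f^{\frown}x}(T)$ as the term indexed by $F\cup\{n\}$, and $-x+A^{\star}$ as the term indexed by $\{n\}$; since (3) is precisely what the converse direction uses, you should verify it explicitly. Two smaller repairs: first, $\mathcal{B}_{f}(T)$ is by definition $\{x:f^{\frown}x\in T\}$, so it cannot be ``declared''---phrase the construction as a recursive definition of membership in $T$, after which the displayed formula is a computation, not a stipulation. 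Second, in the converse you apply condition (2) to $f=\emptyset$, but the hypotheses only say $T$ is nonempty, not that $\emptyset\in T$; start instead from an arbitrary $f\in T$: then $\mathcal{B}_{f}(T)\in p$ and $\mathcal{B}_{f}(T)\subseteq A$ by condition (1), while for each $x\in\mathcal{B}_{f}(T)$ conditions (2) and (3) give $-x+\mathcal{B}_{f}(T)\supseteq\mathcal{B}_{f^{\frown}x}(T)\in p$, hence $-x+A\in p$; thus $\mathcal{B}_{f}(T)\subseteq\{x:-x+A\in p\}$, so $A\in p+p$ for every $A\in p$, and $p=p+p$ since both are ultrafilters. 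With these points tightened the proof is correct and is the same argument the cited sources use.
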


\begin{enumerate}
\item {\Large{}For all $f\in T$, $dom\left(f\right)\in\omega$ and $range\left(f\right)\subseteq A$.}{\Large\par}
\item {\Large{}For all $f\in T$, $\mathcal{B}_{f}\left(T\right)\in p$.}{\Large\par}
\item {\Large{}For all $f\in T$ and any $x\in\mathcal{B}_{f}\left(T\right)$,
$\mathcal{B}_{f^{\frown}x}\left(T\right)\subseteq x^{-1}\mathcal{B}_{f}\left(T\right)$.}{\Large\par}
\end{enumerate}
\begin{thm}
{\Large{}Let $A\subseteq\mathbb{N}$ then, all the statements are
equivalent.}{\Large\par}

{\Large{}(a) $A$ is an essential $\mathcal{F}$ set.}{\Large\par}

{\Large{}(b)There is a non empty set $T$ of function such that:}{\Large\par}

{\Large{}(i) For all $f\in T$,$\text{domain}\left(f\right)\in\omega$
and $rang\left(f\right)\subseteq A$.}{\Large\par}

{\Large{}(ii) For all $f\in T$ and all $x\in B_{f}\left(T\right)$,
$B_{f^{\frown}x}\subseteq-x+B_{f}\left(T\right)$}{\Large\par}

{\Large{}(iii) For all $F\in\mathcal{P}_{f}\left(T\right)$, $\cap_{f\in F}B_{f}(T)$
is a $\mathcal{F}$ set.}{\Large\par}

{\Large{}There is an $FP$ tree $T$ in $A$ such that for each $F\in\mathcal{P}_{f}\left(T\right),\,\bigcap_{f\in F}B_{f}$
is a $\mathcal{F}$ set.}{\Large\par}

{\Large{}(c) There is a downward directed family $\left\langle C_{F}\right\rangle _{F\in I}$
of subsets of $A$ such that}{\Large\par}

{\Large{}(i) for each $F\in I$ and each $x\in C_{F}$ there exists
$G\in I$ with $C_{G}\subseteq x^{-1}C_{F}$ and}{\Large\par}

{\Large{}(ii) for each $\mathcal{F}\in\mathcal{P}_{f}\left(I\right),\,\bigcap_{F\in\mathcal{F}}C_{F}$
is a $\mathcal{F}$-set. }{\Large\par}

{\Large{}(d) There is a decreasing sequence $\left\langle C_{n}\right\rangle _{n=1}^{\infty}$
of subsets of $A$ such that }{\Large\par}

{\Large{}(i) for each $n\in\mathbb{N}$ and each $x\in C_{n}$, there
exists $m\in\mathbb{N}$ with $C_{m}\subseteq x^{-1}C_{n}$ and }{\Large\par}

{\Large{}(ii) for each $n\in\mathbb{N}$, $C_{n}$ is a $\mathcal{F}$
set.}{\Large\par}

\end{thm}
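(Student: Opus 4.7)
The plan is to establish the implications (a)$\Rightarrow$(b), (b)$\Rightarrow$(c), (b)$\Rightarrow$(d), (c)$\Rightarrow$(a), and then to observe that (d) is simply the specialization of (c) to $I=\mathbb{N}$, so (d)$\Rightarrow$(a) follows from the very same argument. This gives the full cycle of equivalences, with (b) playing the role of central hub.

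For (a)$\Rightarrow$(b) I would apply the preceding Lemma to $A\in p$, where $p$ is an idempotent of $\beta(\mathcal{F})$ witnessing that $A$ is essential $\mathcal{F}$. The Lemma supplies a set $T$ of functions satisfying its clauses (1)--(3); clause (1) is exactly (b)(i), clause (3) is (b)(ii) (since $x^{-1}B_{f}(T)=-x+B_{f}(T)$ in additive notation), and (b)(iii) is immediate from (2) because $\bigcap_{f\in F}B_{f}(T)\in p\subseteq\mathcal{F}$ for every finite $F\subseteq T$.

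Given (b), both (c) and (d) are obtained by repackaging the tree. For (c) I would take $I=\mathcal{P}_{f}(T)$ ordered by reverse inclusion and set $C_{F}=\bigcap_{f\in F}B_{f}(T)$; the family is downward directed by construction, and for $x\in C_{F}$ the set $G=\{f^{\frown}x:f\in F\}$ lies in $I$ and satisfies $C_{G}\subseteq x^{-1}C_{F}$ by clause (b)(ii). For (d) I would use that $T$ is countable (each $f\in T$ has finite domain and integer range), enumerate $T=\{f_{1},f_{2},\ldots\}$, and put $C_{n}=\bigcap_{i\leq n}B_{f_{i}}(T)$. Given $x\in C_{n}$, each $f_{i}^{\frown}x$ equals some $f_{j_{i}}\in T$; taking $m\geq\max\{j_{1},\ldots,j_{n}\}$ yields $C_{m}\subseteq\bigcap_{i\leq n}B_{f_{i}^{\frown}x}(T)\subseteq -x+C_{n}$ by (b)(ii), which is the translation clause (d)(i).

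For (c)$\Rightarrow$(a) I would run the standard Ellis argument: let $Q=\bigcap_{F\in I}\overline{C_{F}}$. Since the Ramsey property makes $\mathcal{F}^{\star}$ a filter and $\beta(\mathcal{F})$ is precisely the set of ultrafilters extending $\mathcal{F}^{\star}$, every $\mathcal{F}$ set lies in some member of $\beta(\mathcal{F})$, so $\overline{C_{F}}\cap\beta(\mathcal{F})\neq\emptyset$; combined with downward-directedness this gives the finite intersection property, and compactness of $\beta\mathbb{N}$ yields $Q\cap\beta(\mathcal{F})\neq\emptyset$. For $p,q\in Q\cap\beta(\mathcal{F})$, clause (c)(i) gives $C_{F}\subseteq\{x:-x+C_{F}\in q\}$, whence $C_{F}\in p+q$; and $p+q\in\beta(\mathcal{F})$ because $\beta(\mathcal{F})$ is a left ideal by Theorem~\ref{Theorem 1}. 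Ellis's theorem applied to the compact right-topological subsemigroup $Q\cap\beta(\mathcal{F})$ then produces an idempotent, which necessarily contains $A$. The principal obstacle I anticipate is justifying $\overline{C_{F}}\cap\beta(\mathcal{F})\neq\emptyset$: one must invoke the Ramsey hypothesis on $\mathcal{F}$ carefully to extract, above any prescribed $\mathcal{F}$ set, an ultrafilter entirely contained in $\mathcal{F}$; once that is in hand the finite-intersection, subsemigroup, and Ellis steps are routine.
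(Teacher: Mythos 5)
Your proposal is correct and follows essentially the same route as the paper: the preceding lemma gives (a)$\Rightarrow$(b), the tree is repackaged as $C_{F}=\bigcap_{f\in F}B_{f}(T)$ for (b)$\Rightarrow$(c) and, via a countable enumeration of $T$, for (b)$\Rightarrow$(d), and (c)$\Rightarrow$(a) is the compact-subsemigroup/Ellis argument on $\bigcap_{F\in I}\overline{C_{F}}\cap\beta(\mathcal{F})$, with (d) absorbed as a special case of (c). The only cosmetic difference is that the paper obtains a single ultrafilter $p$ with $\{C_{F}:F\in I\}\subseteq p\subseteq\mathcal{F}$ by citing Hindman--Strauss Theorem 3.11, whereas you reach $Q\cap\beta(\mathcal{F})\neq\emptyset$ set-by-set plus compactness; the step you flag as the principal obstacle is exactly the content of that cited theorem.
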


\begin{proof}
{\Large{}(a) $\Rightarrow$ (b). As $A$ be an essential $\mathcal{F}$
set, then there exists an idempotent $p\in\beta\left(\mathcal{F}\right)$
such that $A\in p$. Pick a set $T$ of functions as guaranteed by
Lemma 2.3 conclusions $(i)$ and $(ii)$ hold directly. Given $F\in\mathcal{P}_{f}\left(T\right)$,
$B_{f}\in p$ for all $f\in F$, hence $\bigcap_{f\in F}B_{f}\in p$
so $\bigcap_{f\in F}B_{f}$ is a $\mathcal{F}$-set.}{\Large\par}

{\Large{}(b) $\Rightarrow$ (c). Let $T$ be guaranteed by Let $I=\mathcal{P}_{f}\left(T\right)$
and for each $F\in I$, let $C_{F}=\bigcap_{f\in F}B_{f}$. Then directly
each $C_{F}$ is a $\mathcal{F}$ -set. Given $\mathcal{F}\in\mathcal{P}_{f}\left(I\right)$,
if $G=\bigcup\mathcal{F}$, then $\bigcap_{F\in\mathcal{F}}C_{F}=C_{G}$
and is therefore a $\mathcal{F}$-set. To verify (i), let $F\in I$
and let $x\in C_{F}$. Let $G=\left\{ f^{\frown}x:f\in F\right\} $.
For each $f\in F$, $B_{f^{\frown}x}\subseteq-x+B_{f}$ and so $C_{G}\subseteq-x+C_{F}$.}{\Large\par}

{\Large{}(c) $\Rightarrow$ (a). Let $\langle C_{F}\rangle$ is guaranteed
by (c). Let $M=\bigcap_{F\in I}\overline{C_{F}}$. By \cite[Theorem 4.20]{key-14},
$M$ is a subsemigroup of $\beta\mathbb{N}$. By \cite[Theorem 3.11]{key-14}
there is some $p\in\beta\mathbb{N}$ such that $\left\{ C_{F}:F\in I\right\} \subseteq p\subseteq\mathcal{F}$.
Therefore $M\cap\beta\left(\mathcal{F}\right)\neq\emptyset$; and
so $M\cap\beta\left(\mathcal{F}\right)$ is a compact subsemigroup
of $\beta\mathbb{N}$. Thus there is an idempotent $p\in M\cap\beta\left(\mathcal{F}\right)$
and so , $A$ is an essential $\mathcal{F}$-set. }{\Large\par}

{\Large{}It is trivial that (d) implies (c). Assume now that $S$
is countable. We shall show that (b) implies (d). So let $T$ be as
guaranteed by (b). Then $T$ is countable so enumerate $T$ as $\left\{ f_{n}:n\in\mathbb{N}\right\} $.
For $n\in\mathbb{N}$, let $C_{n}=\bigcap_{k=1}^{n}B_{f_{k}}$. Then
each $C_{n}$ is a $\mathcal{F}$set. Let $n\in\mathbb{N}$ and let
$x\in C_{n}$. Pick $m\in\mathbb{N}$ such that 
\[
\left\{ f_{k}^{\frown}x:k\in\left\{ 1,2,\ldots,n\right\} \right\} \subseteq\left\{ f_{1},f_{2},\ldots,f_{m}\right\} .
\]
 Then $C_{m}\subseteq-x+C_{n}$.}{\Large\par}
\end{proof}

\section{{\Large{}combined additive and multiplicative structure}}

{\Large{}In this section and in the next section, we consider $\mathcal{F}$
is dilation invariant with translation invariant.}{\Large\par}

{\Large{}Given a sequence $\langle x_{n}\rangle_{n=1}^{\infty}$ in
$\mathbb{N}$, we say that $\langle y_{n}\rangle_{n=1}^{\infty}$
is a sum subsystem of $\langle x_{n}\rangle_{n=1}^{\infty}$ provided
there exists a sequence $\langle H_{n}\rangle_{n=1}^{\infty}$ of
non-empty finite subset such that $\text{max}H_{n}<\text{min}H_{n+1}$
and $y_{n}=\sum_{t\in H_{n}}x_{t}$ for each $n\in\mathbb{N}$. In
\cite{key-5} Hindman and Bergelson characterized $IP^{*}$ sets by
introducing the following theorem.}{\Large\par}
\begin{thm}
{\Large{}Let $\langle x_{n}\rangle_{n=1}^{\infty}$ be a sequence
in $\mathbb{N}$ and $A$ be $IP^{*}$ set in $\left(\mathbb{N},+\right)$.
Then there exists a subsystem $\langle y_{n}\rangle_{n=1}^{\infty}$
of $\langle x_{n}\rangle_{n=1}^{\infty}$ such that}\\
{\Large{} $FS\left(\langle y_{n}\rangle_{n=1}^{\infty}\right)\cup FP\left(\langle y_{n}\rangle_{n=1}^{\infty}\right)\subseteq A$.}{\Large\par}
\end{thm}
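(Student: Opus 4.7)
{\Large{}The plan is to exploit the algebraic structure of $\beta\mathbb{N}$ under both $+$ and $\cdot$. I would begin by forming $T=\bigcap_{m=1}^{\infty}\overline{FS(\langle x_{n}\rangle_{n=m}^{\infty})}$, which is a closed additive subsemigroup of $\beta\mathbb{N}$ and therefore contains an additive idempotent $p$. Since $A$ is $IP^{\star}$, $A$ belongs to every additive idempotent of $\beta\mathbb{N}$, so $A\in p$, and by construction $FS(\langle x_{n}\rangle_{n\geq m})\in p$ for every $m$.}{\Large\par}

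{\Large{}Then I would construct $\langle y_{n}\rangle$ recursively as a sum subsystem of $\langle x_{n}\rangle$ so that at each stage, $FS\cup FP$ of the initial segment already lies in $A$. Having chosen $y_{1},\ldots,y_{n-1}$ using indices at most $m_{n-1}$, I must pick $y_{n}=\sum_{t\in H_{n}}x_{t}$ with $\min H_{n}>m_{n-1}$ such that $y_{n}+s\in A$ for every $s\in FS(\{y_{1},\ldots,y_{n-1}\})\cup\{0\}$ and $y_{n}\cdot u\in A$ for every $u\in FP(\{y_{1},\ldots,y_{n-1}\})\cup\{1\}$. The additive half is handled by the standard Hindman extraction from $p$: because $p+p=p$, the set of candidates satisfying all additive conditions lies in $p$, and intersecting with the tail $FS(\langle x_{k}\rangle_{k>m_{n-1}})\in p$ keeps the intersection in $p$ and in particular nonempty, from which $y_{n}$ is chosen.}{\Large\par}

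{\Large{}The multiplicative conditions require an additional ingredient. I would introduce a multiplicative idempotent $q$ with $A\in q$ and try to align the two extractions, ideally by producing a single ultrafilter satisfying $p+p=p$, $p\cdot p=p$, and $FS(\langle x_{n}\rangle_{n\geq m})\in p$ for every $m$. If such an ultrafilter exists inside $T$, the recursion is completely symmetric between the two operations: at each step the good candidates for the multiplicative conditions are in $p$ by the identity $p\cdot p=p$ and can be intersected with the additive good-candidate set and with the appropriate tail $FS$.}{\Large\par}

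{\Large{}The main obstacle I expect is precisely the existence of such a common idempotent. $IP^{\star}$ is a purely additive notion, so there is no automatic reason for a dilate $u^{-1}A$ with $u\in FP(\{y_{1},\ldots,y_{n-1}\})$ to remain $IP^{\star}$, nor for the closed additive subsemigroup $T$ to contain a multiplicative idempotent. Overcoming this likely requires a structural lemma guaranteeing a simultaneous additive and multiplicative idempotent inside $T$, perhaps obtained by intersecting the additive idempotents of $T$ with a suitable multiplicative closed subsemigroup associated with a tail of $\langle x_{n}\rangle$, or alternatively a direct combinatorial extraction showing at every stage that the multiplicative constraints are jointly compatible with the additive ones within $FS(\langle x_{k}\rangle_{k>m_{n-1}})$.}{\Large\par}
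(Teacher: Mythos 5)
Your additive half is the standard Galvin--Glazer argument and is fine, but the multiplicative half stalls at exactly the point you flag as the ``main obstacle,'' and the remedy you propose is a dead end. The missing idea is that the dilates $u^{-1}A$ \emph{do} automatically remain $IP^{\star}$: the family of additive $IP$ sets is dilation invariant, since if $FS(\langle z_{k}\rangle_{k=1}^{\infty})\subseteq B$ then $uB\supseteq u\cdot FS(\langle z_{k}\rangle_{k=1}^{\infty})=FS(\langle uz_{k}\rangle_{k=1}^{\infty})$, so $uB$ is again an $IP$ set; consequently, for an $IP^{\star}$ set $A$ and any $IP$ set $B$ we have $A\cap uB\neq\emptyset$ and hence $u^{-1}A\cap B\neq\emptyset$, i.e.\ $u^{-1}A$ is again $IP^{\star}$. (This is precisely the role of Lemmas 4.7 and 4.8 in this paper, where the analogous statement for essential $\mathcal{F}^{\star}$ sets is what forces the standing hypothesis that $\mathcal{F}$ be dilation invariant.) Once you have this, no multiplicative structure on $\beta\mathbb{N}$ is needed at all: each $u^{-1}A$, being $IP^{\star}$, lies in the \emph{same} additive idempotent $p\in\bigcap_{m}\overline{FS(\langle x_{n}\rangle_{n=m}^{\infty})}$, so at stage $m+1$ the set of admissible candidates, namely $A^{\star}\cap\bigcap_{n\in FS(\langle y_{i}\rangle_{i=1}^{m})}(-n+A^{\star})\cap\bigcap_{u\in FP(\langle y_{i}\rangle_{i=1}^{m})}u^{-1}A^{\star}\cap FS(\langle x_{t}\rangle_{t>\max H_{m}})$, is a finite intersection of members of $p$ and is therefore nonempty. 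This is exactly how the paper argues for its Theorem 4.9 (the essential $\mathcal{F}^{\star}$ analogue); the present statement is quoted from Bergelson and Hindman, whose proof runs the same way.

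Your proposed fix --- a single ultrafilter with $p+p=p=p\cdot p$ inside $T$ --- cannot work: Hindman and Strauss prove that no ultrafilter on $\mathbb{N}$ is simultaneously an additive and a multiplicative idempotent. Even the weaker alignment of an additive idempotent with a separate multiplicative idempotent $q$ containing $A$ would not help, because the elements $y_{n}$ must be drawn from the tails $FS(\langle x_{t}\rangle_{t>m})$, which there is no reason for $q$ to contain. So the correct resolution is your second, ``direct combinatorial'' alternative, and the dilation lemma above is precisely the statement that makes the finitely many multiplicative constraints compatible with the additive ones at every stage of the induction.
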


{\Large{}In \cite{key-8}, D. De showed that central$^{\star}$ sets
also possess some $IP^{\star}$ set-like properties for some specified
sequences called minimal sequence:}{\Large\par}
\begin{defn}
{\Large{}A sequence $\langle x_{n}\rangle_{n=1}^{\infty}$ in $\mathbb{N}$
is minimal sequence if 
\[
\cap_{m=1}^{\infty}cl\left(FS\left(\langle x_{n}\rangle_{n=m}^{\infty}\right)\right)\cap K\left(\beta\mathbb{N}\right)\neq\emptyset.
\]
}{\Large\par}
\end{defn}

{\Large{}It is known that $\langle2^{n}\rangle_{n=1}^{\infty}$ is
a minimal sequence while the sequence $\langle2^{2n}\rangle_{n=1}^{\infty}$
is not a minimal sequence. In \cite{key-1} it is proved that in $\left(\mathbb{N},+\right)$
minimal sequences are nothing but those for which $FS\left(\langle x_{n}\rangle_{n=1}^{\infty}\right)$
is picewise syndetic i.e. $cl\left(FS\left(\langle x_{n}\rangle_{n=1}^{\infty}\right)\right)\cap K\left(\beta\mathbb{N}\right)\neq\emptyset$.
And in \cite{key-8} D.De proved the following substantial multiplicative
result of central$^{\star}$ sets:}{\Large\par}
\begin{thm}
{\Large{}Let $\langle x_{n}\rangle_{n=1}^{\infty}$ be a minimal sequence
in $\mathbb{N}$ and $A$ be central$^{*}$ set in $\left(\mathbb{N},+\right)$.
Then there exists a subsystem $\langle y_{n}\rangle_{n=1}^{\infty}$
of $\langle x_{n}\rangle_{n=1}^{\infty}$ such that $FS\left(\langle y_{n}\rangle_{n=1}^{\infty}\right)\cup FP\left(\langle y_{n}\rangle_{n=1}^{\infty}\right)\subseteq A$.}{\Large\par}
\end{thm}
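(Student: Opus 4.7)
The plan is to combine Hindman's idempotent-ultrafilter technique with the minimal-sequence hypothesis; the new ingredient, relative to the $IP^{\star}$ case, is that central$^{\star}$ sets are stable under ``division by $y$''.

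First, I would locate an additive idempotent $p$ with the right properties. The set $M=\bigcap_{m=1}^{\infty}cl\bigl(FS(\langle x_{n}\rangle_{n=m}^{\infty})\bigr)$ is a nested intersection of compact subsemigroups of $(\beta\mathbb{N},+)$, hence itself a compact right-topological subsemigroup. Minimality of $\langle x_n\rangle$ says precisely that $M\cap K(\beta\mathbb{N},+)\neq\emptyset$, so $K(M)\subseteq K(\beta\mathbb{N},+)$ and Ellis's theorem yields a minimal additive idempotent $p\in K(M)$ with $FS(\langle x_n\rangle_{n=m}^{\infty})\in p$ for every $m$. Since $A$ is central$^{\star}$, $A\in p$.

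Second, I would establish the multiplicative-shift lemma: for every $y\in\mathbb{N}$, $A/y=\{x:xy\in A\}\in p$. The map $m_y:n\mapsto yn$ is an additive homomorphism whose continuous extension $\tilde m_y$ sends $(\beta\mathbb{N},+)$ onto the closed subsemigroup $\overline{y\mathbb{N}}$. Because $y\mathbb{N}$ is syndetic, $K(\overline{y\mathbb{N}})=\overline{y\mathbb{N}}\cap K(\beta\mathbb{N},+)$, and the continuous surjective homomorphism $\tilde m_y$ carries $K(\beta\mathbb{N},+)$ into this smallest ideal; hence $\tilde m_y(p)$ is a minimal additive idempotent of $\beta\mathbb{N}$, so $A\in\tilde m_y(p)$ by central$^{\star}$-ness, which unravels to $A/y\in p$. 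Since $\tilde m_{y_1\cdots y_k}=\tilde m_{y_1}\circ\cdots\circ\tilde m_{y_k}$ and every factor preserves minimality, the same argument gives $A/w\in p$ for every $w=\prod_{i\in F}y_i$.

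Third, carry out the induction. Put $A^{\star}=\{a\in A:-a+A\in p\}$; standard idempotent calculus gives $A^{\star}\in p$ and $-a+A^{\star}\in p$ for every $a\in A^{\star}$. Suppose $H_1,\ldots,H_n$ and $y_i=\sum_{t\in H_i}x_t$ have been chosen with $\max H_i<\min H_{i+1}$, $\sum_{i\in F}y_i\in A^{\star}$, and $\prod_{i\in F}y_i\in A$ for every nonempty $F\subseteq\{1,\ldots,n\}$. Form
\[
T_{n+1}=A^{\star}\cap\bigcap_{\emptyset\neq F\subseteq\{1,\ldots,n\}}\bigl(-s_F+A^{\star}\bigr)\cap\bigcap_{\emptyset\neq F\subseteq\{1,\ldots,n\}}(A/p_F),
\]
where $s_F=\sum_{i\in F}y_i$ and $p_F=\prod_{i\in F}y_i$; every factor lies in $p$ by the two previous steps, so $T_{n+1}\cap FS(\langle x_n\rangle_{n=N+1}^{\infty})\in p$ with $N=\max H_n$. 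Pick any element of this intersection and represent it as $\sum_{t\in H_{n+1}}x_t$ with $\min H_{n+1}>N$; both invariants persist, which closes the induction.

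The principal obstacle is the multiplicative-shift lemma in the second step: the combinatorial definition of central$^{\star}$ does not visibly imply stability under preimage by a multiplication map. The argument rests on the semigroup-theoretic fact that a continuous additive homomorphism onto a closed \emph{syndetic} subsemigroup preserves minimal idempotents, which itself depends on identifying $K(\overline{y\mathbb{N}})$ with $\overline{y\mathbb{N}}\cap K(\beta\mathbb{N},+)$. Once this is granted, the additive bookkeeping in the third step is routine.
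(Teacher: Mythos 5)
Your argument is correct, and it is essentially the standard proof: the paper itself quotes this theorem from De's work without proof, but its own proof of the generalization to essential $\mathcal{F}^{\star}$ sets later in the same section follows exactly your outline (locate an idempotent $p$ containing every $FS(\langle x_{n}\rangle_{n=m}^{\infty})$, show $y^{-1}A\in p$, then run the $A^{\star}$ induction). The one point of divergence is the multiplicative-shift lemma: you obtain $y^{-1}A\in p$ by pushing the minimal idempotent forward under the continuous extension of $n\mapsto yn$ and using syndeticity of $y\mathbb{N}$ to identify $K(\overline{y\mathbb{N}})$ with $\overline{y\mathbb{N}}\cap K(\beta\mathbb{N})$, whereas the paper's route (Lemmas on $nB$ and $n^{-1}A$) shows via the elementary characterization that $yB$ is again central (resp.\ essential $\mathcal{F}$) whenever $B$ is, and then dualizes. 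Both are valid; yours is more direct for the central$^{\star}$ case but depends on $p$ being minimal, while the paper's version is the one that survives the passage to an arbitrary dilation- and translation-invariant Ramsey family.
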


{\Large{}In \cite{key-7} D. De established an analog version of the
above theorem in case of $C^{*}$ sets for some specific type of sequences
called almost minimal sequence.}{\Large\par}
\begin{defn}
{\Large{}A sequence $\langle x_{n}\rangle_{n=1}^{\infty}$ in $\mathbb{N}$
is almost minimal sequence if 
\[
\cap_{m=1}^{\infty}cl\left(FS\left(\langle x_{n}\rangle_{n=m}^{\infty}\right)\right)\cap J\left(\mathbb{N}\right)\neq\emptyset.
\]
}{\Large\par}
\end{defn}

{\Large{}In\cite{key-7}, with help of N. Hindman, D. De introduced
an example of almost minimal sequence which is not minimal sequence
and characterized the almost minimal sequences by the following theorem:}{\Large\par}
\begin{thm}
{\Large{}In $(\mathbb{N},+)$ the following conditions are equivalent:}{\Large\par}

{\Large{}(1) $\langle x_{n}\rangle_{n=1}^{\infty}$ is almost minimal
sequence.}{\Large\par}

{\Large{}(2) $FS(\langle x_{n}\rangle_{n=1}^{\infty})$ is a $J$
set.}{\Large\par}

{\Large{}(3) There is an idempotent in $\cap_{m=1}^{\infty}cl\left(FS\left(\langle x_{n}\rangle_{n=m}^{\infty}\right)\right)\cap J\left(\mathbb{N}\right)$.}{\Large\par}
\end{thm}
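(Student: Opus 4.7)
The plan is to establish the cycle $(1) \Rightarrow (2) \Rightarrow (3) \Rightarrow (1)$. Among these, $(3) \Rightarrow (1)$ is immediate from the definition of almost minimal sequence, since any idempotent in $\bigcap_{m=1}^{\infty} cl(FS(\langle x_n\rangle_{n=m}^{\infty})) \cap J(\mathbb{N})$ already witnesses that the intersection is nonempty. The implication $(1) \Rightarrow (2)$ is also short: pick any $p$ in that intersection. Then $FS(\langle x_n\rangle_{n=1}^{\infty}) \in p$ because $p \in cl(FS(\langle x_n\rangle_{n=1}^{\infty}))$, and since $p \in J(\mathbb{N}) = \beta(\mathcal{J})$ every member of $p$ lies in the family $\mathcal{J}$; in particular $FS(\langle x_n\rangle_{n=1}^{\infty})$ is a $J$-set.

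The substantive content is the implication $(2) \Rightarrow (3)$. My first step is to upgrade the hypothesis: assuming $FS(\langle x_n\rangle_{n=1}^{\infty}) \in \mathcal{J}$, I claim the tail $FS(\langle x_n\rangle_{n=m}^{\infty})$ is in $\mathcal{J}$ for every $m$. Splitting an element of $FS(\langle x_n\rangle_{n=1}^{\infty})$ according to whether its support lies entirely in $\{1,\dots,m-1\}$ yields the finite cover
\[
FS(\langle x_n\rangle_{n=1}^{\infty}) \;=\; FS(\langle x_n\rangle_{n=m}^{\infty}) \;\cup\; \bigcup_{\emptyset \neq F \subseteq \{1,\dots,m-1\}} \bigl(\sigma_F + FS(\langle x_n\rangle_{n=m}^{\infty})\bigr) \;\cup\; \{\sigma_F : \emptyset \neq F \subseteq \{1,\dots,m-1\}\},
\]
where $\sigma_F = \sum_{i \in F} x_i$. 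The last piece is finite and so not in $\mathcal{J}$, hence the Ramsey property forces one of the first two groups of pieces to be in $\mathcal{J}$, and the translation invariance of $\mathcal{J}$ then gives $FS(\langle x_n\rangle_{n=m}^{\infty}) \in \mathcal{J}$.

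Next I assemble the algebraic picture. The set $T := \bigcap_{m=1}^{\infty} cl(FS(\langle x_n\rangle_{n=m}^{\infty}))$ is a closed subsemigroup of $(\beta\mathbb{N},+)$ by the standard lemma from the Galvin--Glazer proof of Hindman's theorem. By Theorem~\ref{Theorem 1} applied to the translation-invariant Ramsey family $\mathcal{J}$, the set $J(\mathbb{N}) = \beta(\mathcal{J})$ is a closed left ideal of $\beta\mathbb{N}$. For each $m$ the set $S_m := cl(FS(\langle x_n\rangle_{n=m}^{\infty})) \cap J(\mathbb{N})$ is nonempty (any ultrafilter in $\beta(\mathcal{J})$ containing the $J$-set $FS(\langle x_n\rangle_{n=m}^{\infty})$ works), closed, and nested as $m$ grows, so by compactness $T \cap J(\mathbb{N}) = \bigcap_m S_m$ is nonempty. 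Moreover, if $p,q \in T \cap J(\mathbb{N})$ then $p+q \in T$ (since $T$ is a subsemigroup) and $p+q \in J(\mathbb{N})$ (since $J(\mathbb{N})$ is a left ideal), so $T \cap J(\mathbb{N})$ is itself a closed subsemigroup. The Ellis--Numakura lemma then produces an idempotent in $T \cap J(\mathbb{N})$, which is precisely condition (3).

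The main obstacle I anticipate is the first step of $(2) \Rightarrow (3)$, namely inferring that every tail $FS(\langle x_n\rangle_{n=m}^{\infty})$ is a $J$-set from the single hypothesis at $m=1$. It requires combining partition regularity of $\mathcal{J}$, translation invariance, and the fact that $\mathcal{J}$ contains no finite sets in one careful bookkeeping step. Every remaining step is a routine compactness or semigroup manipulation in $\beta\mathbb{N}$ of the kind already employed in Section~3.
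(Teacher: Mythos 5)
Your proposal is correct and follows essentially the same route as the paper: the paper only cites this $J$-set instance from the literature, but its own proof of the generalized version (the characterization of almost $\mathcal{F}$ minimal sequences) uses exactly your decomposition of $FS(\langle x_{n}\rangle_{n=1}^{\infty})$ into the tail, a finite piece, and translates of the tail, together with translation invariance, the nested-compactness argument, and an idempotent in the resulting compact subsemigroup. The only cosmetic difference is that you apply partition regularity to the family $\mathcal{J}$ at the level of sets and then pass to ultrafilters, whereas the paper runs the same case analysis inside a fixed ultrafilter $p\subseteq\mathcal{F}$.
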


{\Large{}Now it is write place to state the main theorem of \cite{key-7}:}{\Large\par}
\begin{thm}
{\Large{}Let $\langle x_{n}\rangle_{n=1}^{\infty}$ be a minimal sequence
in $\mathbb{N}$ and $A$ be $C^{*}$ set in $\left(\mathbb{N},+\right)$.
Then there exists a subsystem $\langle y_{n}\rangle_{n=1}^{\infty}$
of $\langle x_{n}\rangle_{n=1}^{\infty}$ such that 
\[
FS\left(\langle y_{n}\rangle_{n=1}^{\infty}\right)\cup FP\left(\langle y_{n}\rangle_{n=1}^{\infty}\right)\subseteq A.
\]
}{\Large\par}
\end{thm}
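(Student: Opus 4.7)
The hypothesis ``minimal sequence'' in the statement is almost certainly a typo for ``almost minimal sequence'' --- the $C^{\star}$ setting corresponds to the $J$-family, which is exactly what Theorem 4.7 governs, and with ``minimal'' the conclusion would reduce trivially to Theorem 4.4, since $C^{\star}$-sets are central$^{\star}$. Under this reading, my plan is to adapt De's proof of Theorem 4.4, replacing minimal idempotents by essential $J$-idempotents ($C$-idempotents). The first step is to invoke Theorem 4.7 to produce an additive idempotent
$$p \in \bigcap_{m=1}^{\infty} cl\bigl(FS(\langle x_{n}\rangle_{n=m}^{\infty})\bigr) \cap J(\mathbb{N}),$$
which is a $C$-idempotent containing every tail $FS(\langle x_{n}\rangle_{n=m}^{\infty})$. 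The $C^{\star}$ hypothesis on $A$ then gives $A\in p$.

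Next I would run a Galvin--Glazer style induction. Suppose $y_{1},\dots,y_{n}$, with $y_{k}=\sum_{t\in H_{k}}x_{t}$ and $\max H_{k}<\min H_{k+1}$, have been chosen so that $FS(\{y_{1},\dots,y_{n}\})\cup FP(\{y_{1},\dots,y_{n}\})\subseteq A$. I want the next block $H_{n+1}$ inside a tail $FS(\langle x_{n}\rangle_{n=M}^{\infty})$ with $M>\max H_{n}$ so that $y_{n+1}:=\sum_{t\in H_{n+1}}x_{t}$ satisfies $y_{n+1}\in A$, $a+y_{n+1}\in A$, and $a\cdot y_{n+1}\in A$ for every $a$ in the current $FS\cup FP$. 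This reduces to showing that
$$B_{n} \;=\; A \cap \bigcap_{a}\bigl((-a+A)\cap a^{-1}A\bigr),$$
where $a$ ranges over $FS(\{y_{1},\dots,y_{n}\})\cup FP(\{y_{1},\dots,y_{n}\})$, is in $p$; for then $B_{n}$ meets every tail, and $H_{n+1}$ can be extracted from that intersection.

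The additive shifts $-a+A$ pose no problem: $p+p=p$ together with $A\in p$ forces $\{a:-a+A\in p\}\in p$, which iterates over the finite family of $a$'s. The main obstacle is the multiplicative shifts $a^{-1}A$, over which an additive idempotent gives no control. My plan here follows De's pairing trick: alongside $p$, simultaneously produce a multiplicative idempotent $q$ with $A\in q$ and $FP(\langle x_{n}\rangle_{n=m}^{\infty})\in q$ for every $m$, by applying Ellis's lemma to a compact multiplicative subsemigroup built from the almost minimality of $\langle x_{n}\rangle$ --- chosen so that the $C^{\star}$-property of $A$ still forces $A$ into every idempotent of that subsemigroup. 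Once $(p,q)$ is in hand, $a^{-1}A\in q$ follows from $q\cdot q=q$, and $B_{n}$ remains large in both $p$ and $q$ at each inductive step. The genuinely hard step I anticipate is establishing the existence of this compatible pair $(p,q)$ --- matching the additive $J$-idempotent $p$ with a multiplicative idempotent $q$ through which $A\in q$ can be preserved --- so that the additive and multiplicative extractions can be alternated cleanly; the remainder of the argument is then a routine recursion.
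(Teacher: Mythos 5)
You are right that ``minimal'' in the statement should read ``almost minimal'', and the first half of your plan --- use the characterization of almost minimal sequences to produce an additive idempotent $p\in\bigcap_{m=1}^{\infty}cl\bigl(FS(\langle x_{n}\rangle_{n=m}^{\infty})\bigr)\cap J(\mathbb{N})$, observe $A\in p$ since $A$ is $C^{\star}$, and run a Galvin--Glazer recursion inside the tails --- is exactly the skeleton of De's argument and of the paper's generalization to essential $\mathcal{F}^{\star}$ sets. The gap is in how you handle the multiplicative shifts. The mechanism the paper uses (its two dilation lemmas, resting on the dilation invariance of the family of $J$ sets) is that $a^{-1}A$ is itself a $C^{\star}$ set whenever $A$ is: $aB$ is again a $C$ set for every $C$ set $B$, so $a^{-1}A$ meets every $C$ set. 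Consequently $a^{-1}A$ belongs to the \emph{same} additive idempotent $p$, your set $B_{n}$ is a finite intersection of members of $p$, and the entire recursion lives in the single ultrafilter $p$. No second idempotent is needed, and the ``genuinely hard step'' you flag does not arise.

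Your substitute --- pairing $p$ with a multiplicative idempotent $q$ --- does not work as described, for two reasons. First, $q\cdot q=q$ together with $A\in q$ only yields $\{a\in\mathbb{N}:a^{-1}A\in q\}\in q$, i.e.\ $a^{-1}A\in q$ for $q$-many $a$; it gives no control over the finitely many specific $a$'s already constructed, which include the additively generated elements $\sum_{i\in F}y_{i}$ that you cannot arrange in advance to lie in a prescribed member of $q$. Second, and more fundamentally, even if you knew $a^{-1}A\in q$ for every relevant $a$, the next element $y_{n+1}$ must be drawn from a tail $FS(\langle x_{t}\rangle_{t=M}^{\infty})$, which is a member of $p$, while the constraints $y_{n+1}\in a^{-1}A$ would only be $q$-large; membership of some required sets in $p$ and others in $q$ gives no guarantee that their common intersection is nonempty, so no single $y_{n+1}$ can be extracted. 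The fix is not to build a compatible pair $(p,q)$ but to prove the dilation lemma and push every constraint, additive and multiplicative alike, back into $p$.
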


{\Large{}As we know that $C$ sets are essential $\mathcal{J}$ sets,
the above theorem motives us to think some analog result for essential
$\mathcal{F}$ sets. First let us define almost $\mathcal{F}$ minimal
sequence.}{\Large\par}
\begin{defn}
{\Large{}A sequence $\langle x_{n}\rangle_{n=1}^{\infty}$ in $\mathbb{N}$
is almost $\mathcal{F}$ minimal sequence if 
\[
\cap_{m=1}^{\infty}cl\left(FS\left(\langle x_{n}\rangle_{n=m}^{\infty}\right)\right)\cap\beta\left(F\right)\neq\emptyset
\]
}{\Large\par}
\end{defn}

{\Large{}We can characterize almost $\mathcal{F}$ minimal sequences
as like as almost minimal sequence given below and can be proved in
the same way as the author did in \cite{key-7} for almost minimal
sequences:}{\Large\par}
\begin{thm}
{\Large{}In $(\mathbb{N},+)$ the following conditions are equivalent:}{\Large\par}

{\Large{}(a) $\langle x_{n}\rangle_{n=1}^{\infty}$ is almost $\mathcal{F}$
minimal sequence.}{\Large\par}

{\Large{}(b) $FS(\langle x_{n}\rangle_{n=1}^{\infty})\in q$, for
some $q\in\beta(\mathcal{F})$.}{\Large\par}

{\Large{}(c) There is an idempotent in $\cap_{m=1}^{\infty}cl(FS(\langle x_{n}\rangle_{n=m}^{\infty}))\cap\beta(\mathcal{F})$.}{\Large\par}
\end{thm}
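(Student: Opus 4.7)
The plan is to establish the cycle $(a) \Rightarrow (b) \Rightarrow (c) \Rightarrow (a)$. The implications $(a) \Rightarrow (b)$ and $(c) \Rightarrow (a)$ are immediate: for $(a) \Rightarrow (b)$, any witness $q$ of almost $\mathcal{F}$ minimality lies in $\beta(\mathcal{F})$ and contains $FS(\langle x_n \rangle_{n=1}^{\infty})$ upon taking $m=1$; for $(c) \Rightarrow (a)$, an idempotent inside the target set witnesses that the set is nonempty.

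For the substantive direction $(b) \Rightarrow (c)$, the goal is to exhibit an idempotent in $T \cap \beta(\mathcal{F})$, where $T := \bigcap_{m=1}^{\infty} cl(FS(\langle x_n \rangle_{n=m}^{\infty}))$. The main step is to show, by induction on $m$, that for every $m \in \mathbb{N}$ there exists $q_m \in \beta(\mathcal{F})$ with $FS(\langle x_n \rangle_{n=m}^{\infty}) \in q_m$. The base case $m = 1$ is precisely $(b)$. For the inductive step I would exploit the decomposition
\[
FS(\langle x_n \rangle_{n=m}^{\infty}) = \{x_m\} \cup \bigl(x_m + FS(\langle x_n \rangle_{n=m+1}^{\infty})\bigr) \cup FS(\langle x_n \rangle_{n=m+1}^{\infty}).
\]
Since $q_m$ is nonprincipal (otherwise $\mathcal{F}$ would contain a singleton and, by translation invariance, would be $\mathcal{P}(\mathbb{N}) \setminus \{\emptyset\}$, trivializing the statement), one of the last two sets must lie in $q_m$. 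If $FS(\langle x_n \rangle_{n=m+1}^{\infty}) \in q_m$ I set $q_{m+1} = q_m$; otherwise $x_m + FS(\langle x_n \rangle_{n=m+1}^{\infty}) \in q_m$ and I define $q_{m+1}$ by $A \in q_{m+1} \Leftrightarrow x_m + A \in q_m$. Translation invariance of $\mathcal{F}$, exactly as used in the proof of Theorem \ref{Theorem 1}, then forces $q_{m+1} \in \beta(\mathcal{F})$, and $FS(\langle x_n \rangle_{n=m+1}^{\infty}) \in q_{m+1}$ by construction.

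Once the $q_m$'s are in hand, $\{cl(FS(\langle x_n \rangle_{n=m}^{\infty})) \cap \beta(\mathcal{F})\}_{m \geq 1}$ is a decreasing chain of nonempty closed subsets of $\beta\mathbb{N}$, so compactness forces $T \cap \beta(\mathcal{F})$ to be nonempty. Now $T$ is a compact subsemigroup of $(\beta\mathbb{N}, +)$ by a standard fact from \cite{key-14}, while $\beta(\mathcal{F})$ is a closed left ideal of $\beta\mathbb{N}$ by Theorem \ref{Theorem 1}. Hence whenever $p, r \in T \cap \beta(\mathcal{F})$, we have $p + r \in T$ by the semigroup property of $T$ and $p + r \in \beta(\mathcal{F})$ by the left-ideal property of $\beta(\mathcal{F})$; thus $T \cap \beta(\mathcal{F})$ is a nonempty compact subsemigroup of $\beta\mathbb{N}$ and, by Ellis's theorem, contains an idempotent, which is exactly the witness required by $(c)$. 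The main technical obstacle is the inductive construction of the $q_m$'s, where the translation invariance of $\mathcal{F}$ is essential; everything else is a direct application of Ellis's theorem combined with the left-ideal structure of $\beta(\mathcal{F})$.
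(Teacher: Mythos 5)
Your proof is correct and follows essentially the same route as the paper: both reduce $(b)\Rightarrow(c)$ to showing that each $cl\left(FS\left(\langle x_{n}\rangle_{n=m}^{\infty}\right)\right)\cap\beta\left(\mathcal{F}\right)$ is nonempty by decomposing the $FS$-set and pulling the ultrafilter back along a translation via the translation invariance of $\mathcal{F}$, and then conclude by compactness and the existence of idempotents in compact subsemigroups. The only difference is organizational: you shift one term $x_{m}$ at a time by induction, whereas the paper decomposes $FS\left(\langle x_{n}\rangle_{n=1}^{\infty}\right)$ for each fixed $m$ all at once over the finite set $FS\left(\langle x_{n}\rangle_{n=1}^{m-1}\right)$ — and your explicit treatment of the principal-ultrafilter case is if anything cleaner than the paper's.
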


\begin{proof}
{\Large{}$(a)\implies(b)$ follows from definition.}{\Large\par}

{\Large{}$(b)\implies(c):$ Since $FS\left(\langle x_{n}\rangle_{n=1}^{\infty}\right)\in q\in\beta(\mathcal{F})$
we get}\\
{\Large{} $cl\left(FS\left(\langle x_{n}\rangle_{n=1}^{\infty}\right)\right)\cap\beta\left(\mathcal{F}\right)\neq\emptyset$.
By \cite[Lemma 5.11]{key-14} Choose $\cap_{m=1}^{\infty}cl\left(FS\left(\langle x_{n}\rangle_{n=m}^{\infty}\right)\right)$.
It will easy to see that $\cap_{m=1}^{\infty}cl\left(FS\left(\langle x_{n}\rangle_{n=1}^{\infty}\right)\right)$
is a closed subsemigroup of $\beta\mathbb{N}$ and as well as $\beta(\mathcal{F})$
is also closed subsemigroup $\beta\mathbb{N}$. Hence $\,\cap_{m=1}^{\infty}cl\left(FS\left(\langle x_{n}\rangle_{n=m}^{\infty}\right)\right)\cap\beta\left(\mathcal{F}\right)$
is a compact subsemigroup of $\left(\beta\mathbb{N},+\right)$. So
it will be sufficient to check that $\cap_{m=1}^{\infty}cl\left(FS\left(\langle x_{n}\rangle_{n=m}^{\infty}\right)\right)\cap\beta\left(\mathcal{F}\right)\neq\emptyset$.}{\Large\par}

{\Large{}Now choose arbitrarily $m\in\mathbb{N}$ and then}\\
{\Large{} $FS\left(\langle x_{n}\rangle_{n=1}^{\infty}\right)=FS\left(\langle x_{n}\rangle_{n=m}^{\infty}\right)\cup FS\left(\langle x_{n}\rangle_{n=1}^{m-1}\right)\cup\left\{ t+FS\left(\langle x_{n}\rangle_{n=m}^{\infty}\right):t\in FS\left(\langle x_{n}\rangle_{n=1}^{m-1}\right)\right\} $
and so we have one of the followings:}{\Large\par}

{\Large{}$1.\,FS\left(\langle x_{n}\rangle_{n=m}^{\infty}\right)\in p$}{\Large\par}

{\Large{}$2.\,FS\left(\langle x_{n}\rangle_{n=1}^{m-1}\right)\in p$}{\Large\par}

{\Large{}$3.\,t+FS\left(\langle x_{n}\rangle_{n=m}^{\infty}\right)\in p$
for some $t\in FS\left(\langle x_{n}\rangle_{n=1}^{m-1}\right)$}{\Large\par}

{\Large{}Now (1) is not possible as in that case $p$ will be a member
of principle ultrafilter. If (2) holds then we have done. Now if we
assume (3) holds then for some}\\
{\Large{} $t\in FS\left(\langle x_{n}\rangle_{n=1}^{m-1}\right)$,
we have $t+FS\left(\langle x_{n}\rangle_{n=m}^{\infty}\right)\in p$.
Choose $q\in cl\left(FS\left(\langle x_{n}\rangle_{n=m}^{\infty}\right)\right)$
so that $t+q=p$. Now for every $F\in q$, $t\in\left\{ n\in\mathbb{N}:-n+\left(t+F\right)\in q\right\} $
so that $t+F\in p$. Since $\mathcal{F-}$sets are translation invariant,
$F$ is a $\mathcal{F-}$sets. We have $q\in\beta\left(\mathcal{F}\right)\cap cl\left(FS\left(\langle x_{n}\rangle_{n=m}^{\infty}\right)\right)$.}{\Large\par}

{\Large{}$(c)\implies(a)$ follows from definition of $\mathcal{F}$
minimal sequence and condition (3).}{\Large\par}
\end{proof}
{\Large{}To prove the main theorem of this section following to lemmas
are essential.}{\Large\par}
\begin{lem}
{\Large{}If $A$ be an essential $\mathcal{F}$ set in $(\mathbb{N},+)$
then $nA$ is also an essential $\mathcal{F}$ set in $(\mathbb{N},+)$
for any $n\in\mathbb{N}$.}{\Large\par}
\end{lem}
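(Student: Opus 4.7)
The plan is to promote the idempotent witnessing that $A$ is essential $\mathcal{F}$ to a new idempotent in $\beta(\mathcal{F})$ containing $nA$, by transporting it through the continuous extension of the multiplication-by-$n$ map. Concretely, let $p\in\beta(\mathcal{F})$ be an idempotent with $A\in p$, and let $\mu_{n}\colon\mathbb{N}\to\mathbb{N}$ be given by $\mu_{n}(k)=nk$. Extend $\mu_{n}$ continuously to a map $\tilde{\mu}_{n}\colon\beta\mathbb{N}\to\beta\mathbb{N}$. The natural candidate is then $q:=\tilde{\mu}_{n}(p)$, and we aim to verify that (i) $q$ is an additive idempotent, (ii) $q\in\beta(\mathcal{F})$, and (iii) $nA\in q$. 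This will prove $nA$ is essential $\mathcal{F}$.

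For (i), I would check that $\tilde{\mu}_{n}$ is a homomorphism of $(\beta\mathbb{N},+)$. The key point is that $\mu_{n}(k+m)=nk+nm$ holds on $\mathbb{N}$, and one extends this identity to $\beta\mathbb{N}$ in the standard two-step way: first fix one coordinate to be principal and use right-continuity of $+$ together with continuity of $\tilde{\mu}_{n}$ to deduce $\tilde{\mu}_{n}(k+q)=nk+\tilde{\mu}_{n}(q)$ for all $q$; then fix $q\in\beta\mathbb{N}$ and use right-continuity again to pass from principal $p$ to arbitrary $p$. Once $\tilde{\mu}_{n}$ is an additive homomorphism, $p+p=p$ immediately gives $q+q=q$.

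For (iii), observe that $\mu_{n}^{-1}(nA)=A$, so by the definition of the pushforward ultrafilter, $nA\in\tilde{\mu}_{n}(p)$ iff $A\in p$, which holds by assumption. For (ii), take any $B\in q=\tilde{\mu}_{n}(p)$. Then $B':=\mu_{n}^{-1}(B)=\{k\in\mathbb{N}:nk\in B\}\in p$, and since $p\subseteq\mathcal{F}$ we have $B'\in\mathcal{F}$. Dilation invariance of $\mathcal{F}$ (assumed at the start of the section) gives $nB'\in\mathcal{F}$, and since $nB'\subseteq B$, the upward hereditary property of the family $\mathcal{F}$ yields $B\in\mathcal{F}$. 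Hence $q\subseteq\mathcal{F}$, i.e.\ $q\in\beta(\mathcal{F})$, and the proof concludes.

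The only subtle step is (i), because the multiplicative structure being used is not the semigroup $(\beta\mathbb{N},\cdot)$ but the continuous extension of a particular self-map; however, this is a standard density-plus-continuity argument in $\beta\mathbb{N}$ and poses no real obstacle. Steps (ii) and (iii) are purely formal once dilation invariance has been invoked.
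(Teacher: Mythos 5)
Your proof is correct, but it takes a genuinely different route from the paper. The paper invokes its elementary characterization of essential $\mathcal{F}$ sets (condition (d) of Theorem 3.3): it extracts a decreasing chain $\langle C_{k}\rangle$ of $\mathcal{F}$-sets inside $A$ with the shift property $C_{p}\subseteq-t+C_{k}$, dilates the whole chain to $\langle nC_{k}\rangle$ (using dilation invariance to keep each $nC_{k}$ in $\mathcal{F}$, and the identity $nC_{p}\subseteq-ns+nC_{k}$ for $t=ns$), and reads off that $nA$ is essential $\mathcal{F}$ from the same characterization. You instead work directly at the ultrafilter level: you push the witnessing idempotent $p$ forward under the continuous extension of $k\mapsto nk$, which is left multiplication by $n$ in $(\beta\mathbb{N},\cdot)$, and use the distributive law $n\cdot(p+q)=n\cdot p+n\cdot q$ (Lemma 13.1 in Hindman--Strauss) to see that $n\cdot p$ is again an additive idempotent; your verifications that $nA\in n\cdot p$ and that $n\cdot p\in\beta(\mathcal{F})$ (via $n\mu_{n}^{-1}(B)=B\cap n\mathbb{N}\subseteq B$, dilation invariance, and upward heredity) are both sound. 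What your approach buys is independence from the elementary characterization theorem and an explicit idempotent witnessing the conclusion; what the paper's buys is a proof that stays entirely at the level of subsets of $\mathbb{N}$ once the characterization is in hand. One small imprecision: in your step (i), the first half of the density-plus-continuity argument (principal $k$ on the left, $q$ varying) uses continuity of the left translations $\lambda_{k}$ and $\lambda_{nk}$ (i.e.\ that $\mathbb{N}$ lies in the topological center), not right-continuity of $+$; only the second half uses right-continuity. This does not affect the validity of the argument.
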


\begin{proof}
{\Large{}If $A$ be an essential $\mathcal{F}$ set, then by elementary
characterization of essential $\mathcal{F}$ set we get a sequence
of $\mathcal{F}$ sets $\left\langle C_{k}\right\rangle _{k=1}^{\infty}$
with $A\supseteq C_{1}\supseteq C_{2}\supseteq\cdots$ such that for
each $k\in\mathbb{N}$ and each $t\in C_{k}$, there exists $p\in\mathbb{N}$
with $C_{p}\subseteq-t+C_{k}$. Now consider the sequence $\left\langle nC_{k}\right\rangle _{k=1}^{\infty}$
of $\mathcal{F}$ sets which satisfies $nA\supseteq nC_{1}\supseteq nC_{2}\supseteq\cdots$
and for each $k\in\mathbb{N}$ and each $t\in nC_{k}$, there exists
$p\in\mathbb{N}$ with $nC_{p}\subseteq-t+nC_{k}$. This proves that
$nA$ is an essential $\mathcal{F}$ set in $(\mathbb{N},+)$ for
any $n\in\mathbb{N}$.}{\Large\par}
\end{proof}
{\Large{}We get another lemma given below.}{\Large\par}
\begin{lem}
{\Large{}If $A$ be an essential $\mathcal{F}^{\star}$ set in $(\mathbb{N},+)$
then $n^{-1}A$ is also a essential $\mathcal{F^{\star}}$ set in
$(\mathbb{N},+)$ for any $n\in\mathbb{N}$.}{\Large\par}
\end{lem}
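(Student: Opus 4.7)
The plan is to derive this dual statement directly from the previous lemma by a simple intersection argument, using the definition that an essential $\mathcal{F}^\star$ set is one that meets every essential $\mathcal{F}$ set. Recall that $n^{-1}A=\{x\in\mathbb{N}:nx\in A\}$, so the natural pairing to exploit is $nB\cap A\neq\emptyset \iff B\cap n^{-1}A\neq\emptyset$ for any $B\subseteq\mathbb{N}$.

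First I would fix $n\in\mathbb{N}$ and let $B$ be an arbitrary essential $\mathcal{F}$ set in $(\mathbb{N},+)$; the goal is to show $n^{-1}A\cap B\neq\emptyset$. By the preceding lemma, $nB=\{nb:b\in B\}$ is again an essential $\mathcal{F}$ set. Since $A$ is essential $\mathcal{F}^\star$, by definition it intersects every essential $\mathcal{F}$ set, so in particular $A\cap nB\neq\emptyset$. Pick $y\in A\cap nB$ and write $y=nx$ with $x\in B$.

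Then $nx=y\in A$ says $x\in n^{-1}A$, while $x\in B$ was built in, so $x\in n^{-1}A\cap B$, which gives the required non-emptiness. Since $B$ was an arbitrary essential $\mathcal{F}$ set, this proves that $n^{-1}A$ meets every essential $\mathcal{F}$ set, i.e.\ $n^{-1}A$ is an essential $\mathcal{F}^\star$ set in $(\mathbb{N},+)$.

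There is no real obstacle here: the content is entirely in the previous lemma (which required the elementary characterization of essential $\mathcal{F}$ sets and dilation invariance of $\mathcal{F}$), and this dual statement follows formally by the standard ``$\mathcal{F}$ vs.\ $\mathcal{F}^\star$'' duality. The only small thing to be careful about is not to conflate $n^{-1}A$ with $\{y/n:y\in A,\,n\mid y\}$ in the scalar sense; one should simply read $n^{-1}A$ as the preimage under multiplication by $n$, which is what makes the pairing with $nB$ work cleanly.
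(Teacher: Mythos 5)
Your proposal is correct and follows exactly the paper's argument: fix an arbitrary essential $\mathcal{F}$ set $B$, use the preceding lemma to see that $nB$ is essential $\mathcal{F}$, intersect it with $A$, and pull the witness back to $B\cap n^{-1}A$. No differences worth noting.
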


\begin{proof}
{\Large{}It is sufficient to show that for any essential $\mathcal{F}$
set $B$, $B\cap n^{-1}A\neq\emptyset$. Since $B$ is essential $\mathcal{F}$
set , $nB$ is essential $\mathcal{F}$ set and $A\cap nB\neq\emptyset$.
Choose $m\in A\cap nB$ and $k\in B$ such that $m=nk$. Therefore
$m=nk$. Therefore $k=m/n\in n^{-1}A$ so $B\cap n^{-1}A\neq\emptyset$.}{\Large\par}
\end{proof}
{\Large{}We now show that all $\mathcal{F}^{*}$ set have a substantial
multiplicative property.}{\Large\par}
\begin{thm}
{\Large{}Let $\langle x_{n}\rangle_{n=1}^{\infty}$ be a $\mathcal{F}$
minimal sequence and $A$ be a in $(\mathbb{N},+)$. Then there exists
a sum subsystem $\langle y_{n}\rangle_{n=1}^{\infty}$ of $\langle x_{n}\rangle_{n=1}^{\infty}$
such that}\\
{\Large{} $FS(\langle y_{n}\rangle_{n=1}^{\infty})\cup FP(\langle y_{n}\rangle_{n=1}^{\infty})\subseteq A$.}{\Large\par}
\end{thm}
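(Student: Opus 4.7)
The plan is to combine the $\mathcal{F}$-minimality of $\langle x_n\rangle_{n=1}^{\infty}$ with the closure properties of essential $\mathcal{F}^{\star}$ sets under translations (automatic from shift invariance) and dilations (Lemma~4.9). First, invoke Theorem~4.6 to obtain an additive idempotent
\[
p\in\bigcap_{m=1}^{\infty}cl\bigl(FS(\langle x_n\rangle_{n=m}^{\infty})\bigr)\cap\beta(\mathcal{F}).
\]
Since $A$ is essential $\mathcal{F}^{\star}$ and $p$ is an idempotent of $\beta(\mathcal{F})$, we have $A\in p$, and of course $FS(\langle x_n\rangle_{n=m}^{\infty})\in p$ for every $m\in\mathbb{N}$.

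The main construction is inductive: choose $y_n=\sum_{t\in H_n}x_t$ with $\max H_n<\min H_{n+1}$ while maintaining a set
\[
D_n=A\cap\bigcap_{s\in S_n}(-s+A)\cap\bigcap_{t\in P_n}(t^{-1}A)
\]
that belongs to $p$, where $S_n=FS(\{y_1,\ldots,y_n\})\cup\{0\}$ and $P_n=FP(\{y_1,\ldots,y_n\})\cup\{1\}$ (with $S_0=\{0\}$, $P_0=\{1\}$, and $D_0=A$). At stage $n+1$, form $D_n^{\star}=\{x\in D_n:-x+D_n\in p\}$, which lies in $p$ by the standard idempotent property. Fix $m_{n+1}>\max H_n$ and pick $y_{n+1}\in D_n^{\star}\cap FS(\langle x_k\rangle_{k=m_{n+1}}^{\infty})$; both factors are in $p$, so the intersection is nonempty, and $y_{n+1}$ is automatically a finite sum $\sum_{t\in H_{n+1}}x_t$ with $\min H_{n+1}\geq m_{n+1}>\max H_n$.

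The main technical obstacle is verifying $D_{n+1}\in p$. Multiplicative shifts cause no trouble: each new element of $P_{n+1}$ has the form $t\cdot y_{n+1}$ with $t\in P_n$, and iterating Lemma~4.9 shows $(t\cdot y_{n+1})^{-1}A$ is essential $\mathcal{F}^{\star}$, so it lies in every idempotent of $\beta(\mathcal{F})$, including $p$. The additive shifts are where the choice $y_{n+1}\in D_n^{\star}$ is essential: for $s\in S_{n+1}$ of the form $s'+y_{n+1}$ with $s'\in S_n$, the inclusion $D_n\subseteq-s'+A$ gives $-s+A=-y_{n+1}+(-s'+A)\supseteq-y_{n+1}+D_n$, and $-y_{n+1}+D_n\in p$ by the defining property of $D_n^{\star}$. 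A finite intersection of sets in $p$ is again in $p$, so $D_{n+1}\in p$ and the induction proceeds.

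Once the induction runs, the inclusion $FS(\langle y_n\rangle_{n=1}^{\infty})\cup FP(\langle y_n\rangle_{n=1}^{\infty})\subseteq A$ is immediate from the definition of $D_n$: any $y_{n+1}\in D_n$ forces $s+y_{n+1}\in A$ for every $s\in S_n$ and $t\cdot y_{n+1}\in A$ for every $t\in P_n$, accounting for every new finite sum and finite product introduced at stage $n+1$.
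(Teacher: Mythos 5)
Your proposal is correct and follows essentially the same route as the paper: obtain an essential idempotent $p\in\bigcap_{m=1}^{\infty}cl\bigl(FS(\langle x_{n}\rangle_{n=m}^{\infty})\bigr)\cap\beta(\mathcal{F})$, note $A\in p$ together with $-s+A\in p$ and $t^{-1}A\in p$ (the latter via Lemma~4.9), and run the standard inductive sum-subsystem construction inside a finite intersection of members of $p$. Your bookkeeping with $D_{n}$ and $D_{n}^{\star}$ is in fact a cleaner rendering of the paper's $A^{\star}$ argument (which, e.g., writes $FS(\langle y_{i}\rangle_{i=1}^{m})$ where the multiplicative step needs $FP$), but the underlying idea is identical.
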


\begin{proof}
{\Large{}since $FS(\langle x_{n}\rangle_{n=1}^{\infty})$ is almost
$\mathcal{F}$ minimal sequence in $\mathbb{N}$. We can find some
essential idempotent $p\in\beta\left(\mathcal{F}\right)$ for when
$FS(\langle x_{n}\rangle_{n=1}^{\infty})\in p$. Since A be a $\mathcal{F}^{*}$
set for every $n\in\mathbb{N}$, $n^{-1}A\in p$. Let $A^{*}=\{n\in A:-n+A\in p\}$,
then $A^{*}\in p$. We can choose $y_{1}\in A^{*}\cap FS(\langle x_{n}\rangle_{n=1}^{\infty})$.
Inductively, let $m\in\mathbb{N}$ and $\langle y_{i}\rangle_{i=1}^{m}$,
$\langle H_{i}\rangle_{i=1}^{m}$ in $\mathcal{P}_{f}(\mathbb{N})$
be chosen with the following property:}{\Large\par}

{\Large{}$1.\,i\in\{1,2,\ldots\,,m-1\}\,\text{max}H_{i}<\text{min}H_{i+1}$}{\Large\par}

{\Large{}$2.\,\text{If}\,y_{i}={\displaystyle \sum_{t\in H_{i}}x_{t}\,\text{then}\,\sum_{t\in H_{i}}x_{t}\in A^{*}\,\text{and}\,FS(\langle y_{i}\rangle_{i=1}^{m})\subseteq A}.$}{\Large\par}

{\Large{}We observe that $\{\,\sum_{t\in H}x_{t}:\,H\in\mathcal{P}_{f}(\mathbb{N}),\,\text{min}H>\text{max}H_{m}\}$.
It follows that we can choose $H_{m+1}\in\mathcal{P}_{f}(\mathbb{N})$
such that $\text{min}H_{m+1}>\text{max}H_{m},\,\sum_{t\in H_{m+1}}x_{t}\in A^{*}$,
$\sum_{t\in H_{m+1}}x_{t}\in-n+A^{*}$ for every $n\in FS(\langle y_{i}\rangle_{i=1}^{m})$
and $\sum_{t\in H_{m+1}}x_{t}\in n^{-1}A^{*}$for every $n\in FS(\langle y_{i}\rangle_{i=1}^{m})$.
Putting $y_{m+1}=\sum_{t\in H_{m+1}}x_{t}$. Show the induction can
be continued and proves the theorem.}{\Large\par}
\end{proof}

\section{{\Large{}mixing implies all order mixing}}

{\Large{}By a measure preserving dynamical system(MDS) we mean set
$(X,\mathcal{B},\mu,T)$, where $(X,\mathcal{B},\mu)$ is a Lebesgue
space and $T$ is invertable and measure preserving space from compact
matric space $X$ to $X$.We say a measurable function $f\in L^{2}\left(X\right)$
is rigid for $T$ if for some sequence $\langle n_{k}\rangle_{k=1}^{\infty}$
such that $T^{n_{k}}f\to f$ in $L^{2}\left(X\right)$. Before starting
the disscussion about this section, let us define strong mixing, mild
mixing and weak mixing.}{\Large\par}
\begin{enumerate}
\item {\Large{}A measure preserving dynamical system $(X,\mathcal{B},\mu,T)$
is weak mixing if for all $A,B\in\mathcal{B}$ with $\mu\left(A\right)\mu\left(B\right)>0$
\[
\text{lim}_{n\to\infty}\frac{1}{n}\sum_{i=0}^{n-1}\mid\mu\left(A\cap T^{-n}B\right)-\mu\left(A\right)\mu\left(B\right)\mid=0
\]
}{\Large\par}
\item {\Large{}A measure preserving dynamical system $(X,\mathcal{B},\mu,T)$
is strong mixing if for all $A,B\in\mathcal{B}$ with $\mu\left(A\right)\mu\left(B\right)>0$
\[
\text{lim}_{n\to\infty}\mu\left(A\cap T^{-n}B\right)=\mu\left(A\right)\mu\left(B\right)
\]
}{\Large\par}
\item {\Large{}A measure preserving dynamical system $(X,\mathcal{B},\mu,T)$
is mild mixing if there are no non-constant rigid function $f\in L^{2}\left(X\right)$.}{\Large\par}
\end{enumerate}
{\Large{}It is it is interesting and motivational result that mild
mixing and weak mixing can be characterized in terms of $IP^{*}$sets,
central$^{\star}$sets and $D^{\star}$ sets. We know the following
theorem from \cite{key-10} proved by H. Frustenberg, connects weak
mixing with $D^{\star}$set.}{\Large\par}
\begin{thm}
{\Large{}The measure preserving dynamical system$(X,B,\mu,T)$ is
Weak mixing iff for any $A,B\in\mathcal{B}$ with $\mu\left(A\right)\mu\left(B\right)>0$
and any $\varepsilon>0$, the set 
\[
\left\{ n\in\mathbb{N}:\mid\mu\left(A\cap T^{-n}B\right)-\mu\left(A\right)\mu\left(B\right)\mid<\varepsilon\right\} 
\]
is a $D^{\star}$ set.}{\Large\par}
\end{thm}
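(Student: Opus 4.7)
The plan is to translate both sides of the equivalence into statements about the exceptional set
\[
E(A,B,\varepsilon):=\{n\in\mathbb{N}:|\mu(A\cap T^{-n}B)-\mu(A)\mu(B)|\geq\varepsilon\},
\]
which is exactly the complement of the set appearing in the theorem. First I would recall the Koopman--von Neumann style reformulation of weak mixing: the system is weak mixing iff $E(A,B,\varepsilon)$ has upper density zero for every $A,B\in\mathcal{B}$ with $\mu(A)\mu(B)>0$ and every $\varepsilon>0$. This is a direct consequence of the Cesaro-average definition, since a bounded nonnegative sequence has vanishing Cesaro average precisely when each of its level sets has density zero. In parallel, I would unpack the meaning of $D^{\star}$: by the definition of essential $\mathcal{F}^{\star}$ set applied to $\mathcal{F}=\Delta$, a subset $S\subseteq\mathbb{N}$ is $D^{\star}$ iff $S$ belongs to every idempotent $p\in\beta(\Delta)$ (equivalently, $S^{c}$ lies in no such idempotent).

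For the forward direction, if the system is weak mixing then $E(A,B,\varepsilon)$ has density zero, so $E(A,B,\varepsilon)\notin\Delta$, and therefore $E(A,B,\varepsilon)$ lies in no ultrafilter of $\beta(\Delta)$, in particular in no idempotent. Hence $N(A,B,\varepsilon)=E(A,B,\varepsilon)^{c}$ belongs to every idempotent of $\beta(\Delta)$, so it is $D^{\star}$. This direction is essentially bookkeeping once the two reformulations are in place.

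The main obstacle is the converse, which I would handle by contraposition. If the system fails to be weak mixing, the spectral theorem provides a nontrivial eigenvalue $\lambda=e^{2\pi i\alpha}$ of the Koopman operator and a nonconstant eigenfunction $f$; choosing appropriate level sets $A,B$ of $f$ and a suitable $\varepsilon>0$, one arranges that $E(A,B,\varepsilon)$ contains a Bohr neighborhood $\{n:\|n\alpha\|<\delta\}$. The delicate step is upgrading this Bohr neighborhood to an essential $\Delta$-set. The intended mechanism is that the closed set
\[
M:=\bigcap_{\eta>0}\bigcap_{k\in\mathbb{N}}\overline{\{n\geq k:\|n\alpha\|<\eta\}}\subseteq\beta\mathbb{N}
\]
is a compact subsemigroup (using $\|(x+y)\alpha\|\leq\|x\alpha\|+\|y\alpha\|$ modulo one), so it contains an idempotent $p$ by the Ellis--Namakura lemma; one then checks that $p\in\beta(\Delta)$ using the syndeticity of Bohr sets and the translation invariance of $\Delta$ (which by Theorem~\ref{Theorem 1} makes $\beta(\Delta)$ a closed left ideal). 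This would realise $E(A,B,\varepsilon)$ as a $D$-set and contradict the assumption that $N(A,B,\varepsilon)$ is $D^{\star}$. The step I expect to require the most care is verifying that this specific idempotent $p$ actually lies in $\beta(\Delta)$, i.e.\ that every member of $p$ has positive upper density; this is the heart of the backward direction.
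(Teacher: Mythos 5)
First, a point of comparison: the paper does not actually prove this statement --- it is quoted from Furstenberg (in the form stated it is really the Bergelson--Downarowicz characterization of weak mixing via $D^{\star}$ sets), so there is no internal proof to measure your argument against. Your outline follows the standard route. The forward direction is essentially right, with one caveat: the Koopman--von Neumann lemma gives only that $E(A,B,\varepsilon)$ has upper \emph{density} zero, whereas $D$ sets are normally defined through idempotents all of whose members have positive upper \emph{Banach} density, and a set of density zero can still have positive Banach density (e.g.\ $\bigcup_{n}[2^{n},2^{n}+2^{n/2}]$). To conclude $E(A,B,\varepsilon)\notin\Delta$ in that setting you need the uniform Cesaro form of weak mixing, $\lim_{N-M\to\infty}\frac{1}{N-M}\sum_{n=M}^{N-1}|\mu(A\cap T^{-n}B)-\mu(A)\mu(B)|=0$, which does hold (spectral theorem plus Wiener's lemma) but is not what Koopman--von Neumann literally gives you. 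If the paper's $\Delta$ is ordinary upper density the caveat disappears, but then the theorem is not quite the one usually cited.

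The genuine gap is in the converse, exactly where you flagged it. Applying Ellis--Namakura to $M=\bigcap_{\eta,k}\overline{\{n\geq k:\|n\alpha\|<\eta\}}$ produces \emph{some} idempotent of $M$, but there is no reason that particular idempotent should lie in $\beta(\Delta)$: an idempotent ultrafilter supported on a Bohr set may well contain very sparse subsets of it, and "one then checks that $p\in\beta(\Delta)$" is not a check that can succeed for an arbitrary such $p$. The correct order of operations is the one the paper itself uses in the step (c)$\Rightarrow$(a) of Theorem 3.3: first show $M\cap\beta(\Delta)\neq\emptyset$, by noting that finite intersections of the sets $\{n\geq k:\|n\alpha\|<\eta\}$ are again Bohr sets, hence syndetic, hence of positive (Banach) density, so that by \cite[Theorem 3.11]{key-14} there is an ultrafilter $q$ containing all of them with $q\subseteq\Delta$; then observe that $M\cap\beta(\Delta)$ is a compact subsemigroup, being the intersection of the subsemigroup $M$ with the closed left ideal $\beta(\Delta)$; and only then extract an idempotent, which now automatically lies in $\beta(\Delta)$ and contains the Bohr set, exhibiting $E(A,B,\varepsilon)$ as a $D$ set. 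With that reordering, together with a worked-out choice of $A$, $B$, $\varepsilon$ from the eigenfunction (including the rational-eigenvalue case, where the Bohr set degenerates to a multiple of $q\mathbb{N}$, still syndetic), your argument goes through.
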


{\Large{}Another same potentially significant equivalent condition
was proved by V. Bergelson in \cite{key-3}connects weak mixing with
central$^{\star}$set.}{\Large\par}
\begin{thm}
{\Large{}The measure preserving dynamical system$(X,B,\mu,T)$ is
Weak mixing iff for any $A,B\in\mathcal{B}$ with $\mu\left(A\right)\mu\left(B\right)>0$
and any $\varepsilon>0$, the set 
\[
\left\{ n\in\mathbb{N}:\mid\mu\left(A\cap T^{-n}B\right)-\mu\left(A\right)\mu\left(B\right)\mid<\varepsilon\right\} 
\]
is a central$^{\star}$ set.}{\Large\par}
\end{thm}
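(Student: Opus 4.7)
The plan is to establish the equivalence in both directions, using the preceding characterization of weak mixing via $D^{\star}$ sets as the main ingredient.

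For the forward direction ($\Rightarrow$), I would first observe that every central set is piecewise syndetic, and hence has positive upper density. Thus, as families of subsets of $\mathbb{N}$, the central sets sit inside $\Delta$. Dualizing reverses the inclusion, giving $D^{\star} \subseteq \mathrm{central}^{\star}$. The preceding theorem (Furstenberg) asserts that weak mixing of $(X,\mathcal{B},\mu,T)$ forces the return-time set
\[
R_{\varepsilon}(A,B) \,=\, \{n \in \mathbb{N} : |\mu(A \cap T^{-n}B) - \mu(A)\mu(B)| < \varepsilon\}
\]
to be $D^{\star}$ whenever $\mu(A)\mu(B) > 0$ and $\varepsilon > 0$. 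Combining the two gives that $R_{\varepsilon}(A,B)$ is central$^{\star}$.

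For the reverse direction ($\Leftarrow$), the argument is more delicate, because central$^{\star}$ is strictly weaker than $D^{\star}$. I would argue by contrapositive: suppose the system is not weakly mixing. By the spectral theorem, $T$ admits a non-trivial eigenvalue $e^{2\pi i \alpha}$ on $L^{2}_{0}(X)$ with eigenfunction $f$, where $\alpha \notin \mathbb{Z}$. Choose $A$ and $B$ as suitable level sets of $f$; then $\mu(A \cap T^{-n}B)$ is a continuous function of $n\alpha \pmod 1$ and differs from $\mu(A)\mu(B)$ by a fixed positive amount once $n\alpha$ leaves a small arc around a certain value. For $\varepsilon$ small enough, this forces $R_{\varepsilon}(A,B)$ to lie in a Bohr-type set $\{n : \|n\alpha - \beta\| < \delta\}$. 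The contradiction then comes from exhibiting a central set, i.e.\ a member of some minimal idempotent $p \in K(\beta\mathbb{N},+)$, that is disjoint from this Bohr neighborhood, violating the central$^{\star}$ hypothesis.

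The main obstacle is the last step of the reverse direction: constructing a central set that misses a Bohr set. The strategy is to take a syndetic set disjoint from a slightly enlarged Bohr neighborhood, pass to its closure in $\beta\mathbb{N}$, intersect with a minimal left ideal of $(\beta\mathbb{N},+)$ to obtain a compact right topological subsemigroup, and extract an idempotent via the Ellis--Numakura lemma; this idempotent is automatically minimal, and any of its members is the desired central set. The subtle point is ensuring that the syndetic complement of the enlarged Bohr set genuinely meets a minimal left ideal, which rests on the ``thinness'' of Bohr neighborhoods relative to $K(\beta\mathbb{N})$.
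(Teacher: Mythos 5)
The paper does not actually prove this statement; it quotes it from Bergelson's survey, so your argument has to stand on its own. Your forward direction is essentially fine, except that the inclusion you dualize is not quite the one you need: ``central sets lie in $\Delta$'' dualizes to $\Delta^{\star}\subseteq\mathrm{central}^{\star}$, whereas you need $D^{\star}\subseteq\mathrm{central}^{\star}$, i.e.\ that every central set is a $D$ set. That is true, but the reason is that every member of a minimal idempotent is piecewise syndetic and hence has positive upper Banach density, so minimal idempotents are essential $\Delta$ idempotents; this is exactly the chain ``central $\Rightarrow$ quasi central $\Rightarrow$ $D$'' recorded later in the paper. With that correction, weak mixing $\Rightarrow$ $D^{\star}$ $\Rightarrow$ central$^{\star}$ goes through.

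The genuine gap is the final step of your reverse direction. You propose to exhibit a central set avoiding the annular Bohr set $\{n:\|n\alpha-\beta\|<\delta\}$ by taking a syndetic set $S$ in its complement, intersecting $\overline{S}$ with a minimal left ideal $L$, and applying Ellis--Numakura. This fails twice over: $\overline{S}\cap L$ is compact but is not in general a subsemigroup of $(\beta\mathbb{N},+)$, so Ellis--Numakura does not apply; and, more fundamentally, syndetic sets need not be central at all --- the annular Bohr set you are trying to avoid is itself syndetic yet not central, being disjoint from an $IP^{\star}$ set and therefore containing no $IP$ set. The correct and much shorter completion is: the Bohr neighbourhood of zero, $\{n:\|n\alpha\|<\delta\}$, is an $IP^{\star}$ set (a pigeonhole argument on partial sums of any sequence produces a finite sum $m$ with $\|m\alpha\|<\delta$), hence belongs to every idempotent, in particular to some minimal idempotent, and is therefore itself a central set; for $\delta$ and $\varepsilon$ small it is disjoint from $R_{\varepsilon}(A,B)$, so $R_{\varepsilon}(A,B)$ is not central$^{\star}$. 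You should also dispose separately of the non-ergodic and rational-eigenvalue cases, where $R_{\varepsilon}(A,B)$ can be made empty for small $\varepsilon$, which is trivially not central$^{\star}$.
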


{\Large{}Mild mixing is connected with $IP^{\star}$ set was proved
by H. Frustenberg in \cite{key-10} as follow:}{\Large\par}
\begin{thm}
{\Large{}The measure preserving dynamical system$(X,B,\mu,T)$ is
Weak mixing iff for any $A,B\in\mathcal{B}$ with $\mu\left(A\right)\mu\left(B\right)>0$
and any $\varepsilon>0$, the set 
\[
\left\{ n\in\mathbb{N}:\mid\mu\left(A\cap T^{-n}B\right)-\mu\left(A\right)\mu\left(B\right)\mid<\varepsilon\right\} 
\]
 is an $IP^{\star}$ set.}{\Large\par}
\end{thm}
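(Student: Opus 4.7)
The plan is to prove this equivalence, which (given the preceding paragraph identifying $IP^\star$ sets with \emph{mild} mixing) should be read with ``mild mixing'' in place of ``weak mixing.'' I would split the argument into the two implications and lean heavily on the Furstenberg--Katznelson IP-convergence machinery, which guarantees that every bounded sequence $\langle u_n \rangle$ in a Hilbert space admits an IP subsystem along which $u_n$ converges weakly to some IP-limit.

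For the forward direction I would fix $A,B \in \mathcal{B}$ with $\mu(A)\mu(B)>0$ and $\varepsilon>0$, set
\[
R_{\varepsilon}=\bigl\{n\in\mathbb{N}: \bigl|\mu(A\cap T^{-n}B)-\mu(A)\mu(B)\bigr|<\varepsilon\bigr\},
\]
and assume toward a contradiction that $R_{\varepsilon}$ is not $IP^{\star}$. Then there is a sequence $\langle x_n \rangle$ with $FS(\langle x_n \rangle)\cap R_{\varepsilon}=\emptyset$. Apply the IP-convergence theorem to the bounded sequence $\langle T^{-n}1_{B}\rangle_{n\in FS(\langle x_n\rangle)}$ to extract a sum subsystem $\langle y_n\rangle$ along which $T^{-n}1_{B}$ converges weakly to some $g\in L^{2}(X)$. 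The key structural point is that an IP-limit of the form $\lim_{n\in FS(\langle y_n\rangle)} T^{-n}f$ lies in the \emph{rigid} subspace of the Koopman operator; mild mixing, by definition, forces rigid functions to be constant, so $g\equiv\mu(B)$. Then $\mu(A\cap T^{-n}B)=\langle 1_{A},T^{-n}1_{B}\rangle \to \mu(A)\mu(B)$ along this IP-subsystem, contradicting $FS(\langle y_n\rangle)\subseteq FS(\langle x_n\rangle)\subseteq \mathbb{N}\setminus R_{\varepsilon}$.

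For the converse I would assume every such $R_{\varepsilon}$ is $IP^{\star}$ and suppose, for contradiction, that there is a non-constant rigid $f\in L^{2}(X)$ with $T^{n_k}f\to f$; normalizing, take $\int f\,d\mu=0$ and $\|f\|_{2}=1$. Approximating $f$ by a real bounded function and choosing a level set, I obtain a measurable $A$ with $0<\mu(A)<1$ such that the projection of $1_{A}$ onto the rigid subspace is non-trivial, so $\mu(A\cap T^{-n_k}A)\to c$ for some $c>\mu(A)^{2}$. Refine $\langle n_k\rangle$ to an IP-subsystem (again using the Furstenberg--Katznelson extraction so that the limit behavior is preserved along an $FS$-set), and choose $\varepsilon=(c-\mu(A)^{2})/2$. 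The resulting $FS$-sequence avoids $R_{\varepsilon}$ entirely, contradicting the $IP^{\star}$ hypothesis.

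The main obstacle will be the IP-refinement step in both directions: one must ensure that the weak limit of $T^{-n}f$ along the extracted sum subsystem really is the projection onto the rigid/constant part, and that the choice of level set $A$ in the converse produces a definite gap $c-\mu(A)^2>0$ that survives passage to the IP-subsystem. This is precisely where the spectral decomposition $L^{2}(X)=L^{2}_{\text{rigid}}\oplus L^{2}_{\text{mixing}}$ enters, together with the standard fact that IP-limits of unitary orbits always land in $L^{2}_{\text{rigid}}$; once those tools are invoked, the remainder reduces to bookkeeping with inner products and the definition of $IP^{\star}$.
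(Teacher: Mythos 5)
The paper gives no proof of this statement at all: it is quoted as a known theorem of Furstenberg (and, as you correctly inferred from the surrounding text, the word ``Weak'' in the printed statement is a typo for ``Mild''). So there is nothing in the paper to compare against; what you have written is essentially the standard proof from Furstenberg's book. Your forward direction is sound: if $FS(\langle x_n\rangle)$ misses $R_\varepsilon$, extract a sub-IP-system along which $T^{n}1_B$ converges weakly to some $g$; the identity $n_{\alpha\cup\beta}=n_\alpha+n_\beta$ for disjoint $\alpha,\beta$ lets one show $\mathrm{IP}\text{-}\lim_\alpha\langle T^{n_\alpha}g,g\rangle=\|g\|^2$, hence $g$ is rigid, hence constant under mild mixing, equal to $\mu(B)$, and $\langle 1_A,T^{n}1_B\rangle\to\mu(A)\mu(B)$ along an IP-set inside the complement of $R_\varepsilon$ --- a contradiction. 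That ``IP-limits of unitary orbits are rigid'' is the one lemma you are importing wholesale, and you should at least cite or reproduce it, but it is standard.

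The converse as written has one step that does not work as stated. A rigidity sequence $\langle n_k\rangle$ is just a sequence, not an IP-set, so you cannot ``refine $\langle n_k\rangle$ to an IP-subsystem'' using the Furstenberg--Katznelson extraction --- that machinery only passes from an IP-ring to a sub-IP-ring. The correct device is elementary: first get a set $A$ with $0<\mu(A)<1$ and $T^{n_k}1_A\to 1_A$ in $L^2$ (take $A=\{f>c\}$ for a real rigid $f$ and a level $c$ that is not an atom of the distribution of $f$; then $T^{n_k}f\to f$ a.e.\ along a subsequence forces $T^{n_k}1_A\to 1_A$ in measure --- this is cleaner than your appeal to ``the projection of $1_A$ onto the rigid subspace being non-trivial,'' which by itself does not give convergence along the particular sequence $n_k$). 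Then choose a subsequence $m_1<m_2<\cdots$ of the $n_k$ with $\|T^{m_j}1_A-1_A\|_2<\delta 2^{-j}$; unitarity of $T$ and the triangle inequality give $\|T^{n}1_A-1_A\|_2<\delta$ for \emph{every} $n\in FS(\langle m_j\rangle)$, so $\mu(A\cap T^{-n}A)\geq\mu(A)-\delta>\mu(A)^2+\varepsilon$ on an entire $FS$-set, contradicting the $IP^\star$ hypothesis. With that repair your argument is complete and is the intended proof.
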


{\Large{}Since $D$ sets are essential $\mathcal{\Delta}$ set, and
we connect a mixing with $D$ set, naturally a question arises whether
we can associate a specific mixing with a specific essintial $\mathcal{F}$-set.
In \cite{key-6} C. Chistoferson first introduced essential $\mathcal{F}^{\star}$mixing
and proved some essential results of $\mathcal{F}^{\star}$-mixing.}{\Large\par}
\begin{thm}
{\Large{}The measure preserving dynamical system$(X,B,\mu,T)$ is
essential $\mathcal{F}^{\star}$mixing if for any $A,B\in\mathcal{B}$(or
$f,g\in L^{2}\left(X\right)$) with $\mu\left(A\right)\mu\left(B\right)>0$
and any $\varepsilon>0$, the set 
\[
\begin{array}{c}
\left\{ n\in\mathbb{N}:\mid\mu\left(A\cap T^{-n}B\right)-\mu\left(A\right)\mu\left(B\right)\mid<\varepsilon\right\} \\
(or\,\left\{ n\in\mathbb{N}:\mid\int_{X}f\left(x\right)g\left(T^{n}x\right)d\mu\left(x\right)-\int_{X}f\left(x\right)d\mu\left(x\right)\int_{X}g\left(x\right)d\mu\left(x\right)\mid<\varepsilon\right\} )
\end{array}
\]
 is an essential $\mathcal{F}^{\star}$ set.}{\Large\par}
\end{thm}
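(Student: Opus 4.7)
The plan is to prove this characterization by reducing essential $\mathcal{F}^{\star}$ mixing to a statement about $p$--limits along idempotents in $\beta(\mathcal{F})$. The underlying fact I will lean on is that a subset $B\subseteq\mathbb{N}$ is essential $\mathcal{F}^{\star}$ if and only if $B\in p$ for every idempotent $p\in\beta(\mathcal{F})$; this follows directly from the definition together with the fact that every essential $\mathcal{F}$ set belongs to some idempotent of $\beta(\mathcal{F})$.

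First I would fix $A,B\in\mathcal{B}$ with $\mu(A)\mu(B)>0$ and $\varepsilon>0$, and denote
\[
R_{\varepsilon}(A,B)=\{n\in\mathbb{N}:|\mu(A\cap T^{-n}B)-\mu(A)\mu(B)|<\varepsilon\}.
\]
For an arbitrary idempotent $p\in\beta(\mathcal{F})$, I would consider the $p$--limit $L_p(A,B)=p\text{-}\lim_n\mu(A\cap T^{-n}B)$, which exists because the sequence is bounded. By standard ultrafilter calculus this $p$--limit is encoded by the operator $U_p f=p\text{-}\lim_n T^n f$ (weak limit in $L^2$), which since $p$ is idempotent is a projection, namely onto the subspace of $p$--invariant functions in $L^2(X)$.

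For the forward direction I would use the assumption of essential $\mathcal{F}^{\star}$ mixing (as introduced in \cite{key-6}) to conclude that the only $p$--invariant functions, for any idempotent $p\in\beta(\mathcal{F})$, are the constants; hence $U_p f=\int f\,d\mu$, giving $L_p(A,B)=\mu(A)\mu(B)$. Thus $R_{\varepsilon}(A,B)\in p$ for every such $p$, so $R_{\varepsilon}(A,B)$ is essential $\mathcal{F}^{\star}$. For the converse I would argue by contraposition: if essential $\mathcal{F}^{\star}$ mixing fails, one can produce $A,B$ and $\varepsilon>0$ together with an idempotent $p\in\beta(\mathcal{F})$ such that $|L_p(A,B)-\mu(A)\mu(B)|\geq\varepsilon$, which forces $\mathbb{N}\setminus R_{\varepsilon/2}(A,B)\in p$; this exhibits an essential $\mathcal{F}$ set disjoint from $R_{\varepsilon/2}(A,B)$, contradicting the set condition. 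The symmetric $L^2$ formulation with test functions $f,g$ is handled in the same way by polarization and approximation of indicator functions.

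The main obstacle I anticipate is the translation between the spectral/operator formulation implicit in the \cite{key-6} definition of essential $\mathcal{F}^{\star}$ mixing and the quantitative set condition stated here. In particular, verifying that $U_p$ really is the projection onto constants for every idempotent $p\in\beta(\mathcal{F})$ requires the input mixing hypothesis to be phrased uniformly over such $p$, and one must be careful that $p$--limits are compatible with the $L^2$ inner product so that $\langle U_p f,g\rangle=p\text{-}\lim_n\langle T^n f,g\rangle$ holds. Once this compatibility is secured, the equivalence follows cleanly from the ultrafilter description of essential $\mathcal{F}^{\star}$ sets.
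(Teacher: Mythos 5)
The statement you are trying to prove is not actually proved in the paper: it is the \emph{definition} of essential $\mathcal{F}^{\star}$ mixing. Note the one-directional ``if'' in its wording, the sentence immediately preceding it (``C.~Christopherson first introduced essential $\mathcal{F}^{\star}$ mixing\dots''), and the parallel ``theorems'' later in the same section that the authors explicitly announce as \emph{definitions} of quasi central$^{\star}$ and $C^{\star}$ mixing. So there is no proof in the paper to compare yours against, and nothing to prove unless one first fixes a different, independent notion of essential $\mathcal{F}^{\star}$ mixing.

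That is exactly where the genuine gap in your proposal lies: you never pin down what the ``assumption of essential $\mathcal{F}^{\star}$ mixing (as introduced in \cite{key-6})'' says. The forward direction of your argument --- ``use the mixing hypothesis to conclude that the only $p$-invariant functions are the constants for every idempotent $p\in\beta(\mathcal{F})$'' --- is precisely the content you would need to establish, not something you may invoke; and if the definition in \cite{key-6} is the set condition itself (as the paper's usage indicates), the argument is circular. The ultrafilter machinery you set up around it is sound and worth keeping: essential $\mathcal{F}^{\star}$ sets are exactly the sets belonging to every idempotent of $\beta(\mathcal{F})$; the weak limit $U_p=p\text{-}\lim_n T^n$ along an idempotent $p$ is an idempotent contraction, hence an orthogonal projection; and the condition $p\text{-}\lim_n\mu(A\cap T^{-n}B)=\mu(A)\mu(B)$ for every idempotent $p\in\beta(\mathcal{F})$ is indeed equivalent to the set $\left\{ n:\mid\mu(A\cap T^{-n}B)-\mu(A)\mu(B)\mid<\varepsilon\right\}$ being essential $\mathcal{F}^{\star}$ for every $\varepsilon>0$. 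That reformulation is a useful lemma for the actual theorem of this section (that essential $\mathcal{F}^{\star}$ mixing implies all-order mixing), but as a proof of the quoted statement it establishes nothing until the hypothesis is given independent content.
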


{\Large{}Now, We know from \cite{key-10}, weak mixing implies all
order weak mixing.}{\Large\par}
\begin{thm}
{\Large{}The measure preserving dynamical system$(X,B,\mu,T)$ is
Weak mixing iff for any $A_{0},A_{1},\ldots\,,A_{k}\in\mathcal{B}$
with}\\
{\Large{} $\mu\left(A_{0}\right)\mu\left(A_{1}\right)\cdots\mu\left(A_{k}\right)>0$
and $n_{1},\ldots,n_{k}\in\mathbb{N}$ with $n_{1}<\ldots<n_{k}$
any $\varepsilon>0$, the set 
\[
\left\{ n\in\mathbb{N}:\mid\mu\left(A_{0}\cap T^{-nn_{1}}A_{1}\cap\ldots\cap T^{-nn_{k}}A_{k}\right)-\mu\left(A_{0}\right)\mu\left(A_{1}\right)\cdots\mu\left(A_{k}\right)\mid<\varepsilon\right\} 
\]
is a $D^{\star}$ set.}{\Large\par}
\end{thm}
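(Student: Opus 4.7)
The plan is to prove the two directions separately. The $(\Leftarrow)$ direction is immediate: specializing the hypothesis to $k=1$ with $n_1=1$ recovers the $D^\star$ condition for two-point correlations, which by Theorem 5.1 is equivalent to weak mixing. The genuine content lies in the $(\Rightarrow)$ direction, which I would prove by induction on $k$, with base case $k=1$ supplied by Theorem 5.1.

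For the inductive step, write $\chi_{A_i} = \mu(A_i) + g_i$ with $g_i$ of mean zero, and expand the $(k+1)$-fold product to obtain
\[
\mu\Big(A_0 \cap \bigcap_{i=1}^{k} T^{-n n_i} A_i\Big) - \prod_{i=0}^{k}\mu(A_i) = \sum_{\emptyset \neq S \subseteq \{0,\ldots,k\}} \Big(\prod_{i \notin S}\mu(A_i)\Big) \int \prod_{i \in S} T^{n n_i} g_i\, d\mu,
\]
with the convention $n_0=0$. Since $\Delta$ has the Ramsey property, the dual family $D^\star$ is a filter, so a finite intersection of $D^\star$ sets is again $D^\star$. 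Hence it suffices to show that for each non-empty $S$ the set of $n$ with $\left|\int \prod_{i \in S} T^{nn_i} g_i\, d\mu\right| < \varepsilon$ is $D^\star$. Singletons vanish identically by $T$-invariance. Pairs, after shifting by $T^{-n n_j}$ with $j=\min S$, become two-point correlations of $T^{n_\ell - n_j}$, which is weak mixing whenever $T$ is, so Theorem 5.1 applies directly. For $|S|\geq 3$, apply the Hilbert-space van der Corput lemma to $u_n = \prod_{i \in S} T^{n n_i} g_i$: after shifting by a common power of $T$, $\langle u_{n+h}, u_n\rangle$ reduces to an $(|S|-1)$-fold correlation in the shifted parameters $n_i - n_j$, to which the inductive hypothesis applies.

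The main obstacle is upgrading the Cesàro-type conclusion naturally produced by van der Corput into the sharper $D^\star$ statement demanded here. I would handle this via a $p$-limit reformulation: a set $A \subseteq \mathbb{N}$ is $D^\star$ iff $A \in p$ for every essential idempotent $p \in \beta(\Delta)$, equivalently, a bounded sequence $a_n$ lies within $\varepsilon$ of a value $L$ on a $D^\star$ set of indices for every $\varepsilon>0$ iff $p\text{-}\lim a_n = L$ for every such $p$. Running the telescoping and van der Corput arguments inside the $p$-limit, and exploiting idempotence $p+p=p$ to commute iterated limits, preserves the $D^\star$ structure at each stage of the induction and delivers the conclusion.
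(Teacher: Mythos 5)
Your architecture is sound and, at bottom, is the same as the one this paper uses for its general theorem (of which the present statement is the instance $\mathcal{F}=\Delta$): recast ``$D^{\star}$ set'' as membership in every essential idempotent $p\in\beta(\Delta)$, so the claim becomes $p\text{-}\lim_{n}\mu\left(A_{0}\cap T^{-nn_{1}}A_{1}\cap\cdots\cap T^{-nn_{k}}A_{k}\right)=\prod_{i}\mu(A_{i})$ for every such $p$, and then run a van der Corput argument along $p$ using idempotence. Be aware, though, that the paper does not actually prove this particular theorem --- it quotes it from Furstenberg --- and its proof of the general essential-$\mathcal{F}^{\star}$ version is carried out only for $k=2$, with no induction; your telescoping decomposition $\chi_{A_{i}}=\mu(A_{i})+g_{i}$ and induction on $|S|$ is the more complete organization (essentially Furstenberg's), and your $(\Leftarrow)$ direction and the use of the filter property of $D^{\star}$ are fine.

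The one step that fails as literally written is the application of the van der Corput lemma when $|S|\geq3$. The lemma you invoke requires $p\text{-}\lim_{n}\langle u_{n+d},u_{n}\rangle=0$ \emph{for each fixed} $d$. In your inductive step, with $u_{n}=\prod_{i\in S'}T^{nm_{i}}g_{i}$ and $m_{i}=n_{i}-n_{j}$, one computes $\langle u_{n+d},u_{n}\rangle=\int\prod_{i\in S'}T^{nm_{i}}h_{i}^{(d)}\,d\mu$ with $h_{i}^{(d)}=T^{dm_{i}}g_{i}\cdot\overline{g_{i}}$, and the inductive hypothesis yields $p\text{-}\lim_{n}\langle u_{n+d},u_{n}\rangle=\prod_{i\in S'}\int h_{i}^{(d)}\,d\mu$, which is \emph{not} zero for fixed $d$: each factor is an autocorrelation $\int T^{dm_{i}}g_{i}\,\overline{g_{i}}\,d\mu$ that is only small for ``most'' $d$. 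You therefore need (i) the variant of the lemma whose hypothesis is the double limit $p\text{-}\lim_{d}p\text{-}\lim_{n}\langle u_{n+d},u_{n}\rangle=0$ (the same Cauchy--Schwarz computation, using $p+p=p$, proves this variant, and it is in fact the form the paper uses in its own $k=2$ calculation), and (ii) one further application of two-fold weak mixing in the variable $d$ to get $p\text{-}\lim_{d}\int T^{dm_{i}}g_{i}\,\overline{g_{i}}\,d\mu=0$, so that the product of autocorrelations has $p\text{-}\lim_{d}$ equal to zero. Relatedly, your induction must be stated for bounded mean-zero functions (weak convergence of $\prod_{i}T^{nm_{i}}h_{i}$), not just for indicators, since the $h_{i}^{(d)}$ are not indicators. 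Both repairs are routine, but without them the induction does not close.
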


{\Large{}Following is an analog version of mild mixing known from
\cite{key-10}:}{\Large\par}
\begin{thm}
{\Large{}The measure preserving dynamical system$(X,B,\mu,T)$ is
Mild mixing iff for any $A_{0},A_{1},\ldots\,,A_{k}\in\mathcal{B}$
with}\\
{\Large{} $\mu\left(A_{0}\right)\mu\left(A_{1}\right)\cdots\mu\left(A_{k}\right)>0$
and $n_{1},\ldots,n_{k}\in\mathbb{N}$ with $n_{1}<\ldots<n_{k}$
any $\varepsilon>0$, the set 
\[
\left\{ n\in\mathbb{N}:\mid\mu\left(A_{0}\cap T^{-nn_{1}}A_{1}\cap\ldots\cap T^{-nn_{k}}A_{k}\right)-\mu\left(A_{0}\right)\mu\left(A_{1}\right)\cdots\mu\left(A_{k}\right)\mid<\varepsilon\right\} 
\]
is an $IP^{\star}$ set.}{\Large\par}
\end{thm}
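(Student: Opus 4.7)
The plan is to prove the two directions separately. The reverse implication is immediate: specializing the hypothesis to $k=1$ and $n_{1}=1$ yields precisely the $IP^{\star}$ characterization of mild mixing recorded in Theorem 5.3, so such a system must be mildly mixing.

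For the forward direction, I would induct on $k$. The base case $k=1$ is again Theorem 5.3. For the inductive step, recast the problem in $L^{2}$ language: writing $f_{i}=\mathbf{1}_{A_{i}}-\mu(A_{i})$ for $i\geq 1$ and $f_{0}=\mathbf{1}_{A_{0}}$, the conclusion reduces to showing that
\[
\text{IP-}\lim_{n}\int_{X}f_{0}\cdot T^{n n_{1}}f_{1}\cdots T^{n n_{k}}f_{k}\,d\mu=0
\]
whenever at least one $f_{i}$ has zero integral, where the IP-limit is taken along an arbitrary IP-filter. The passage between this statement and the $IP^{\star}$ formulation in the theorem is the standard dual reformulation: the set in the statement is $IP^{\star}$ if and only if every IP-limit of the corresponding deviation vanishes.

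The key technical device is the IP van der Corput lemma from \cite{key-10}: if $g_{n}\in L^{2}(X)$ and the iterated IP-limit $\text{IP-}\lim_{m}\text{IP-}\lim_{n}\langle g_{n+m},g_{n}\rangle$ vanishes, then $g_{n}\to 0$ weakly along that IP-filter. Applying this to $g_{n}=T^{n n_{1}}f_{1}\cdots T^{n n_{k}}f_{k}$, the inner products $\langle g_{n+m},g_{n}\rangle$ expand into integrals of $k$-fold products of the shape $\prod_{i=1}^{k}T^{n n_{i}}\bigl(f_{i}\cdot T^{m n_{i}}\overline{f_{i}}\bigr)$, i.e.\ the same structure as the original product but with the new functions $f_{i}\cdot T^{m n_{i}}\overline{f_{i}}$ in place of $f_{i}$. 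Because mild mixing forbids non-trivial rigid functions, for IP-typical $m$ the integral $\int f_{j}\cdot T^{m n_{j}}\overline{f_{j}}\,d\mu$ tends to $|\!\int f_{j}\,d\mu|^{2}=0$ for the index $j$ with $\int f_{j}=0$; hence the reduced $k$-fold product still contains a zero-integral factor, and the inductive hypothesis applies to conclude that its IP-limit vanishes. Feeding this back through van der Corput closes the induction.

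The principal obstacle is the bookkeeping in the IP van der Corput step: one has to control limits in $n$ and $m$ simultaneously along a common IP-filter, verify that the reduction to order $k-1$ really preserves at least one zero-integral factor (this is where the absence of rigid functions is essential, since a rigid $f_{j}$ would force $f_{j}\cdot T^{m n_{j}}\overline{f_{j}}$ to have the \emph{wrong} integral along a rigidity sequence and spoil the vanishing), and pass to suitable IP-subsystems whenever convergence needs to be extracted. This is in essence Furstenberg's argument from \cite{key-10}, obtained from the weak-mixing proof by replacing Cesàro limits with IP-limits throughout and invoking mild mixing at each use of van der Corput to rule out rigid contributions.
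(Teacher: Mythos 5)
The paper does not actually prove this statement: Theorem 5.6 is quoted as known from \cite{key-10} (Furstenberg), and the only proof in this spirit that the paper carries out is for its own Theorem 5.9 (all-order essential $\mathcal{F}^{\star}$ mixing), via the ultrafilter van der Corput lemma (Lemma 5.7) applied to an idempotent $p$. Your outline is essentially that same argument specialized to idempotents in the IP-setting: reverse direction by taking $k=1$, $n_{1}=1$ and invoking the $IP^{\star}$ characterization of mild mixing; forward direction by induction on $k$ with an IP van der Corput lemma applied to $g_{n}=T^{nn_{1}}f_{1}\cdots T^{nn_{k}}f_{k}$. So the skeleton is correct and is the standard route.

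Two points in your sketch deserve tightening. First, after expanding $\langle g_{n+m},g_{n}\rangle=\int\prod_{i=1}^{k}T^{nn_{i}}\bigl(\overline{f_{i}}\cdot T^{mn_{i}}f_{i}\bigr)\,d\mu$ you still have a $k$-fold product of shifts; to invoke the inductive hypothesis you must first factor out $T^{nn_{1}}$ (measure preservation) so that the first factor plays the role of $f_{0}$ and the remaining exponents become $n(n_{i}-n_{1})$, $i=2,\dots,k$, which is a genuine order-$(k-1)$ expression. Second, the assertion that ``the reduced product still contains a zero-integral factor'' is not literally true for a fixed $m$: the functions $h_{i}^{(m)}=\overline{f_{i}}\cdot T^{mn_{i}}f_{i}$ have integrals $\langle T^{mn_{i}}f_{i},f_{i}\rangle$ which are merely small for IP-typical $m$ (by mild mixing of $T^{n_{i}}$, which itself needs the small lemma that mild mixing passes to powers, the analogue of the paper's Lemma 5.8). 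You must either split $h_{i}^{(m)}$ into its mean plus a zero-mean part and bound the mean terms, or use the inductive hypothesis in its full strength (IP-limit equals product of integrals) to get $\prod_{i}\langle T^{mn_{i}}f_{i},f_{i}\rangle$ after the inner limit, and only then let $m$ run. With either repair, and the usual passage to sub-IP-rings to guarantee existence of the iterated limits, the argument closes; these are exactly the steps you flagged as bookkeeping, so the gap is one of precision rather than of ideas.
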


{\Large{}From the Theorem 5.5, a question arises , whether essential
$\mathcal{F}^{\star}$-mixing set implies all order essential $\mathcal{F}^{\star}$-mixing.
The main result of this section is an affirmative answer of this question.
The following lemma was proved in \cite{key-15} by C. Schnell in
a sophisticated manner and posted in the blog of J. Moreira, is main
ingredient of our main result.}{\Large\par}
\begin{lem}
{\Large{}Let $p$ be an idempotent ultrafilter and let $\{x_{n}\}_{n}$
be a bounded sequence in $H$ (Hilbert Space) such that for each $d\in\mathbb{N}$
we have $p\lyxmathsym{\textendash}{\displaystyle \lim_{n}\langle x_{n+d},x_{n}\rangle=0}$.
Then also $p\lyxmathsym{\textendash}{\displaystyle \lim_{n}x_{n}=0}$
weakly.}{\Large\par}
\end{lem}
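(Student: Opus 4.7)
The plan is to prove that the weak $p$-limit $z := \text{w-}p\text{-}\lim_n x_n$ of the sequence --- which exists because bounded sets in a Hilbert space are weakly compact and ultrafilter limits in compact spaces always exist --- satisfies $\|z\|^2 = 0$, and hence $z = 0$. First I would use the weak continuity of the linear functional $\langle \cdot , z \rangle$ to expand
\[
\|z\|^2 = \langle z, z \rangle = p\text{-}\lim_n \langle x_n, z \rangle = p\text{-}\lim_n p\text{-}\lim_m \langle x_n, x_m \rangle,
\]
expressing $\|z\|^2$ as a double iterated $p$-limit of inner products of members of the sequence. This reduces the vector-valued statement to the scalar correlation statement against which the hypothesis is phrased.

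The decisive step is to exploit the idempotency $p = p + p$, which for any bounded scalar function $f$ yields the algebraic identity $p\text{-}\lim_s f(s) = p\text{-}\lim_n p\text{-}\lim_d f(n + d)$. I would apply this identity so as to recast the double iterated limit of $\langle x_n, x_m \rangle$, in two independent indices, as an iterated limit in which the innermost scalar has the base-plus-shift form $\langle x_{n+d}, x_n \rangle$. Once this rearrangement is in place, the hypothesis $p\text{-}\lim_n \langle x_{n+d}, x_n \rangle = 0$ for every $d \in \mathbb{N}$ forces the inner $p$-limit to vanish for each fixed $d$, hence the outer iterated limit as well, yielding $\|z\|^2 = 0$ and therefore $p\text{-}\lim_n x_n = 0$ weakly.

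The hard part will be executing this rearrangement rigorously: iterated $p$-limits over two independent indices $n, m$ are not in general equal to an iterated limit over a base variable and a shift, and only the specific algebraic fact $p + p = p$ legitimises the conversion. I expect to have to track how the shifted ultrafilter $n + p$ interacts with $p$ itself --- which is well-behaved precisely because $p$ is idempotent --- and to use that weak convergence lets the inner product pass through the $p$-limit in each argument separately, combined with the fact that scalar $p$-limits form a ring homomorphism. Once these ingredients are combined in the correct order, the computation collapses to the scalar correlation on which the hypothesis acts directly, which is the essence of Schnell's argument.
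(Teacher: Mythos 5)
Your first reduction is fine: $z=\text{w-}p\text{-}\lim_n x_n$ exists by weak compactness, and $\|z\|^2=p\text{-}\lim_n\langle x_n,z\rangle=p\text{-}\lim_n p\text{-}\lim_m\langle x_n,x_m\rangle$ since each slot of the inner product is separately weakly continuous. The gap is exactly at your ``decisive step.'' Idempotency gives $p\text{-}\lim_s f(s)=p\text{-}\lim_n p\text{-}\lim_d f(n+d)$ for a bounded function of \emph{one} variable; it provides no mechanism for converting the iterated limit of the two-variable quantity $\langle x_n,x_m\rangle$ over \emph{independent} indices into a limit of the diagonal correlations $\langle x_{n+d},x_n\rangle$. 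Every attempt to force this runs into one of two false statements: either you must re-center the inner index at the outer one (i.e.\ use $p\text{-}\lim_m g(m)=p\text{-}\lim_d g(n+d)$ for fixed $n$, which would require $n+p=p$ --- false for idempotents, since every idempotent contains $2\mathbb{N}$ while $1+p$ contains the odd numbers), or you must pass to the limit in both slots of the inner product simultaneously, and the inner product is not jointly weakly continuous ($e_n\to 0$ weakly for an orthonormal basis, yet $\langle e_n,e_n\rangle=1$). Interchanging the two iterated $p$-limits is likewise invalid. You correctly flag that the rearrangement is ``the hard part,'' but $p+p=p$ alone does not legitimise it, and no further device is named, so the proof does not close.

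The missing ingredient, which is what the paper's proof (following Schnell) actually uses, is a van der Corput--type averaging. One writes, for each $N$,
\[
z=\text{w-}p\text{-}\lim_{n_1}\cdots\text{w-}p\text{-}\lim_{n_N}\frac{1}{N}\sum_{k=1}^{N}x_{n_k+\cdots+n_N},
\]
valid because each summand has iterated weak $p$-limit $z$ by idempotency. Expanding $\|\cdot\|^2$ of the average produces $N$ diagonal terms contributing at most $C^2/N$ in total ($C$ a bound for $\|x_n\|$), while each off-diagonal term $\langle x_{n_k+\cdots+n_N},x_{n_l+\cdots+n_N}\rangle$ with $k<l$ has exactly the shape the hypothesis controls: the outer limits fix $d=n_k+\cdots+n_{l-1}$ and the inner limits compute $p\text{-}\lim_{n}\langle x_{n+d},x_n\rangle=0$, with no interchange of limits required. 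Hence $\|z\|^2\le C^2/N$ for every $N$, so $z=0$. Your outline stops precisely where this averaging device is needed, so as it stands the argument is incomplete.
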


\begin{proof}
{\Large{}For each $N\in\mathbb{N}$ we have that,}{\Large\par}

{\Large{}${\displaystyle p-\lim_{n}x_{n}=p-\lim_{n_{1}}p-\lim_{n_{2}}\cdots p-\lim_{n_{N}}\frac{1}{N}\sum_{k=1}^{N}x_{n_{k}+...+n_{N}}}$}{\Large\par}

{\Large{}Taking norms and using the Cauchy-Schwartz inequality}{\Large\par}

{\Large{}
\begin{align*}
{\displaystyle \left\Vert p-\lim_{n\to\infty}x_{n}\right\Vert ^{2}} & \leq{\displaystyle p-\lim_{n_{1}}p-\lim_{n_{2}}\cdots p-\lim_{n_{N}}\left\Vert \sum_{k=1}^{N}x_{n_{k}+\cdots+n_{N}}\right\Vert ^{2}}\\
 & ={\displaystyle \frac{1}{N^{2}}\sum_{k,l=1}^{N}p-\lim_{n_{1}}p-\lim_{n_{2}}\cdots p-\lim_{n_{N}}\left\langle x_{n_{k}+\cdots+n_{N}},x_{n_{l}+\cdots+n_{N}}\right\rangle }\\
 & ={\displaystyle p-\lim_{n_{1}}p-\lim_{n_{2}}\cdots p-\lim_{n_{N}}\frac{1}{N^{2}}\left\langle \sum_{k=1}^{N}x_{n_{k}+\cdots+n_{N}},\sum_{l=1}^{N}x_{n_{l}+\cdots+n_{N}}\right\rangle }\\
 & ={\displaystyle p-\lim_{n_{1}}p-\lim_{n_{2}}\cdots p-\lim_{n_{N}}\frac{1}{N^{2}}\left\langle \sum_{k=1}^{N}x_{n_{k}+\cdots+n_{N}},\sum_{l=1}^{N}x_{n_{l}+\cdots+n_{N}}\right\rangle }\\
 & ={\displaystyle \frac{1}{N^{2}}\sum_{k=1}^{N}p-\lim_{n_{k}}\|x_{n_{k}}\|^{2}+\frac{2}{N^{2}}\sum_{k<l}p-\lim_{n_{k}}p-\lim_{n_{l}}\langle x_{n_{k}+n_{l}},x_{n_{l}}\rangle}\\
 & ={\displaystyle \frac{1}{N}p-\lim_{n}\|x_{n}\|^{2}}
\end{align*}
}{\Large\par}

{\Large{}Since $\left\{ N\right\} $ was chosen arbitrarily we conclude
that ${\displaystyle \left\Vert p-\lim_{n}x_{n}\right\Vert ^{2}=0}.$}{\Large\par}
\end{proof}
{\Large{}Also the following simple lemma is same important to prove
our main theorem of this section and follows from defination of essential
$\mathcal{F}^{\star}$ mixing and lemma 4.10.}{\Large\par}
\begin{lem}
{\Large{}The measure preserving dynamical system $\left(X,\mathcal{B},\mu,T\right)$
is essential $\mathcal{F}^{\star}$ mixing iff $\left(X,\mathcal{B},\mu,T^{n}\right)$
is also essential $\mathcal{F}^{\star}$-mixing for all $n\in\mathbb{N}$.}{\Large\par}
\end{lem}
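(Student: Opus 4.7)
The plan is to verify the equivalence by reducing it to Lemma 4.10 (the closure of essential $\mathcal{F}^{\star}$ sets under the operation $A \mapsto n^{-1}A$). The reverse implication is immediate: setting $n=1$ gives back $(X,\mathcal{B},\mu,T)$, so the substantive content lies in the forward direction.

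For the forward direction, I would fix $n\in\mathbb{N}$, measurable sets $A,B\in\mathcal{B}$ with $\mu(A)\mu(B)>0$, and $\varepsilon>0$. Writing out the set which must be verified to be essential $\mathcal{F}^{\star}$ for the system $(X,\mathcal{B},\mu,T^{n})$,
\[
E_{n} = \bigl\{m\in\mathbb{N}:\bigl|\mu(A\cap (T^{n})^{-m}B)-\mu(A)\mu(B)\bigr|<\varepsilon\bigr\},
\]
and using $(T^{n})^{-m}=T^{-nm}$, one sees that $E_{n}=\{m\in\mathbb{N}:nm\in E\}=n^{-1}E$, where
\[
E = \bigl\{k\in\mathbb{N}:\bigl|\mu(A\cap T^{-k}B)-\mu(A)\mu(B)\bigr|<\varepsilon\bigr\}.
\]
By the hypothesis that $(X,\mathcal{B},\mu,T)$ is essential $\mathcal{F}^{\star}$ mixing, the set $E$ is essential $\mathcal{F}^{\star}$. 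An application of Lemma 4.10 then gives that $n^{-1}E = E_{n}$ is essential $\mathcal{F}^{\star}$ as well. Since $A,B,\varepsilon$ were arbitrary, $(X,\mathcal{B},\mu,T^{n})$ is essential $\mathcal{F}^{\star}$ mixing.

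There is no real obstacle in this argument: the entire difficulty has already been absorbed into Lemma 4.10 (whose own proof rests on Lemma 4.9, i.e.\ on the fact that essential $\mathcal{F}$ sets are preserved under dilation by $n$, a consequence of the elementary characterization from Section~3 together with dilation invariance of $\mathcal{F}$). The only care needed is the bookkeeping identification $(T^{n})^{-m}=T^{-nm}$ so that the mixing set for the power system $T^{n}$ is recognized as the dilation-preimage $n^{-1}E$ of the mixing set for $T$. Once this is observed, the lemma follows in one line.
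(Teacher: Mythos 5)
Your argument is correct and is exactly the route the paper intends: the paper states this lemma without a written proof, remarking only that it ``follows from the definition of essential $\mathcal{F}^{\star}$ mixing and Lemma 4.10,'' and your identification of the mixing set for $T^{n}$ as the dilation-preimage $n^{-1}E$ of the mixing set for $T$, followed by an application of Lemma 4.10, is precisely the omitted detail. The trivial reverse direction via $n=1$ is also handled correctly.
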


{\Large{}Now, we are in the right position of proving the main theorem
of this section. The technique of this proof is traditional.}{\Large\par}
\begin{thm}
{\Large{}The measure preserving dynamical system $(X,B,\mu,T)$ is
$\mathcal{F^{\star}}$ mixing iff for any $A_{0},A_{1},\ldots\,,A_{k}\in\mathcal{B}$
with $\mu\left(A_{0}\right)\mu\left(A_{1}\right)\cdots\mu\left(A_{k}\right)>0$
and $n_{1},\ldots,n_{k}\in\mathbb{N}$ with $n_{1}<\ldots<n_{k}$
any $\varepsilon>0$, the set 
\[
\left\{ n\in\mathbb{N}:\mid\mu\left(A_{0}\cap T^{-nn_{1}}A_{1}\cap\ldots\cap T^{-nn_{k}}A_{k}\right)-\mu\left(A_{0}\right)\mu\left(A_{1}\right)\cdots\mu\left(A_{k}\right)\mid<\varepsilon\right\} 
\]
is an $\mathcal{F^{\star}}$set.}{\Large\par}
\end{thm}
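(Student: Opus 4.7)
The plan is to prove the nontrivial direction---order-$1$ $\mathcal{F}^{\star}$-mixing implies the all-order statement---by induction on $k$; the converse is immediate by taking $k=1$. Fix an arbitrary essential idempotent $p\in\beta(\mathcal{F})$. Since $p$ is closed under finite intersections, it suffices to show that for each choice of data,
\[
p\text{-}\lim_{n}\mu\!\left(A_{0}\cap T^{-nn_{1}}A_{1}\cap\cdots\cap T^{-nn_{k}}A_{k}\right)=\prod_{i=0}^{k}\mu(A_{i}).
\]
By bilinearity and $L^{2}$-approximation, this is equivalent to the following cleaner Hilbert-space statement, which I will call $S_{k}$:
\[
p\text{-}\lim_{n}\prod_{i=1}^{k}T^{nn_{i}}f_{i}=\prod_{i=1}^{k}\int f_{i}\,d\mu\quad\text{weakly in }L^{2},
\]
for all $f_{1},\dots,f_{k}\in L^{\infty}(X)$ and $0<n_{1}<\cdots<n_{k}$. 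The base case $S_{1}$ is immediate from Lemma 4.11 (which says $T^{n_{1}}$ is $\mathcal{F}^{\star}$-mixing) together with the standard passage from indicator functions to $L^{\infty}$.

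For the inductive step $S_{k}\Rightarrow S_{k+1}$, multilinearity lets me assume $\int f_{j}\,d\mu=0$ for some $j$ and reduces the goal to showing that the weak $p$-limit of $F_{n}:=\prod_{i=1}^{k+1}T^{nn_{i}}f_{i}$ is $0$. Using that $T$ preserves $\mu$,
\[
\langle F_{n+d},F_{n}\rangle=\int\prod_{i=1}^{k+1}T^{nn_{i}}\!\left(T^{dn_{i}}f_{i}\cdot\overline{f_{i}}\right)d\mu.
\]
Applying $T^{-nn_{1}}$ under the integral, the $i=1$ factor becomes the $n$-independent function $T^{dn_{1}}f_{1}\cdot\overline{f_{1}}$, while the remaining $k$ factors are shifted by the distinct positive integers $n_{i}-n_{1}$. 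Invoking the inductive hypothesis $S_{k}$ on those $k$ factors then yields
\[
p\text{-}\lim_{n}\langle F_{n+d},F_{n}\rangle=\prod_{i=1}^{k+1}\langle T^{dn_{i}}f_{i},f_{i}\rangle=:c_{d}.
\]
Lemma 4.11 applied to each $T^{n_{i}}$ gives $p\text{-}\lim_{d}\langle T^{dn_{i}}f_{i},f_{i}\rangle=|\!\int f_{i}|^{2}$, whence $p\text{-}\lim_{d}c_{d}=\prod_{i=1}^{k+1}|\!\int f_{i}|^{2}=0$ because $\int f_{j}=0$.

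The proof finishes by a mild generalization of Lemma 4.10: if $\{x_{n}\}$ is bounded in a Hilbert space and $p\text{-}\lim_{d}\!\left(p\text{-}\lim_{n}\langle x_{n+d},x_{n}\rangle\right)=0$, then $p\text{-}\lim_{n}x_{n}=0$ weakly. This is visible from the proof of Lemma 4.10 printed in the paper: the diagonal terms contribute $\tfrac{1}{N}\,p\text{-}\lim_{n}\|x_{n}\|^{2}$, while by idempotency each off-diagonal iterated $p$-limit $p\text{-}\lim_{n_{1}}\cdots p\text{-}\lim_{n_{N}}\langle x_{n_{k}+\cdots+n_{N}},x_{n_{l}+\cdots+n_{N}}\rangle$ ($k<l$) collapses to $p\text{-}\lim_{d}p\text{-}\lim_{m}\langle x_{m+d},x_{m}\rangle=p\text{-}\lim_{d}c_{d}$, rather than to $c_{d}$ itself. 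The same chain of inequalities then gives $\|p\text{-}\lim_{n}x_{n}\|^{2}\le\tfrac{1}{N}p\text{-}\lim_{n}\|x_{n}\|^{2}+\tfrac{N-1}{N}\cdot 2\,\mathrm{Re}(p\text{-}\lim_{d}c_{d})$, and letting $N\to\infty$ forces the weak limit to vanish. Applied to $F_{n}$, this completes $S_{k+1}$.

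The main obstacle is that the scalars $c_{d}$ are generally nonzero for individual $d$, so Lemma 4.10 cannot be cited verbatim; one has to reprove the generalized version described above (a short bookkeeping exercise with iterated ultrafilter limits) or find a substitute. Every other step---the $T^{-nn_{1}}$ change of variables, the reduction to $k$ factors via $S_{k}$, and the scalar $p$-limit in $d$ from Lemma 4.11---is routine.
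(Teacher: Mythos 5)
Your proposal is correct, and at its core it is the same argument the paper uses: a van der Corput--type argument with idempotent ultrafilter limits, in which the correlation $\langle F_{n+d},F_{n}\rangle$ is rewritten by the change of variables $T^{-nn_{1}}$ so that the order-$(k{+}1)$ expression reduces to an order-$k$ expression, which is then killed by lower-order mixing. But you go beyond the printed proof in two ways that are worth recording. First, the paper only treats $k=2$ explicitly (and even there the displayed computation contains typos, e.g.\ ``since $T$ is strongly mixing'' and misprinted exponents); your induction $S_{k}\Rightarrow S_{k+1}$, with the mean-zero reduction and the observation that the shifted exponents $n_{i}-n_{1}$ are $k$ distinct positive integers, is the correct general argument and is what the paper implicitly relies on when it says nothing about $k>2$. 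Second, and more importantly, you correctly identify that Lemma 4.10 cannot be cited verbatim: for fixed $d$ the inner limit $p\text{-}\lim_{n}\langle F_{n+d},F_{n}\rangle=c_{d}=\prod_{i}\langle T^{dn_{i}}f_{i},f_{i}\rangle$ is generally nonzero, and only $p\text{-}\lim_{d}c_{d}=0$ holds. The paper makes exactly this move silently --- it computes the iterated limit $p\text{-}\lim_{m}\bigl(p\text{-}\lim_{n}\langle a_{n+m},a_{n}\rangle\bigr)=0$ and then invokes the lemma whose hypothesis is the stronger statement ``for each $d$'' --- so you have found and repaired a genuine gap in the source. Your repair is right: in the lemma's proof the off-diagonal terms are iterated limits of $\langle x_{m+d},x_{m}\rangle$ over sums of indices, and idempotency of $p$ collapses them to $p\text{-}\lim_{d}p\text{-}\lim_{m}\langle x_{m+d},x_{m}\rangle$, so the hypothesis ``the double $p$-limit vanishes'' suffices. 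The only cosmetic point to add is that the stated theorem is an ``iff,'' and the converse direction is the trivial specialization $k=1$; you note this in passing and it needs no more.
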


\begin{proof}
{\Large{}In this theorem we prove for $k=2$.}{\Large\par}

{\Large{}It remains to show $p-{\displaystyle \lim_{n}}\mu\left(A_{0}\cap T^{-nn_{1}}A_{1}\cap T^{-nn_{2}}A_{2}\right)=\mu\left(A_{0}\right)\mu\left(A_{1}\right)\mu\left(A_{2}\right)$}{\Large\par}

{\Large{}Let $a_{n}\left(x\right)=1_{A_{1}}\left(T^{nn_{1}}x\right)1_{A_{2}}\left(T^{nn_{2}}x\right)-\mu\left(A_{1}\right)\mu\left(A_{2}\right)$. }{\Large\par}

{\Large{}We will show $p-lim_{n}a_{n}$ with respect to the weak topology.}{\Large\par}

{\Large{}Since $T$ is strongly mixing we have }{\Large\par}

{\Large{}$p{\displaystyle -}\left(p-{\displaystyle \lim_{n}}\left(a_{n+m},a_{n}\right)\right)$}{\Large\par}

{\Large{}$=p-{\displaystyle \lim_{m}}\left(p-{\displaystyle \lim_{n}}\int1_{A_{1}}\left(T^{\left(n+m\right)n_{1}}x\right)1_{A_{2}}\left(T^{\left(n+m\right)n_{1}}x\right)1_{A_{1}}\left(T^{nn_{1}}x\right)1_{A_{2}}\left(T^{nn_{2}}x\right)d\mu\right)-\mu\left(A_{1}\right)^{2}\mu\left(A_{2}\right)^{2}$}{\Large\par}

{\Large{}$=p-{\displaystyle \lim_{m}}\left(p-{\displaystyle \lim_{n}}\int1_{A_{1}}\left(T^{mn_{1}}x\right)1_{A_{1}}\left(x\right)1_{A_{2}}\left(T^{n(n_{2}-n_{1})}x\right)1_{A_{2}}\left(T^{m(n_{2}-n_{1})}x\right)d\mu\right)-\mu\left(A_{1}\right)^{2}\mu\left(A_{2}\right)^{2}$}{\Large\par}

{\Large{}$=p-{\displaystyle \lim_{m}}\left(\left(\int1_{A_{1}}\left(T^{nn_{1}}x\right)1_{A_{1}}\left(x\right)d\mu\right)\left(\int1_{A_{2}}\left(T^{mn_{n}}x\right)1_{A_{2}}\left(x\right)d\mu\right)\right)-\mu\left(A_{1}\right)^{2}\mu\left(A_{2}\right)^{2}$ }{\Large\par}

{\Large{}$=\left(\int1_{A_{1}}d\mu\right)\left(\int1_{A_{1}}d\mu\right)\left(\int1_{A_{2}}d\mu\right)\left(\int1_{A_{2}}d\mu\right)-\mu\left(A_{1}\right)^{2}\mu\left(A_{2}\right)^{2}$}{\Large\par}

{\Large{}$=0$}{\Large\par}

{\Large{}By Lemma the above lemma, we have $p-\lim{}_{n}a_{n}=0$
in the weak topology. This proves the theorem.}{\Large\par}
\end{proof}
{\Large{}From the discussion of section 2, we know that quasi central
sets and $C$ sets are essential $\mathcal{PS}$ sets and essential
$C$ sets respectively . So it is reasonable and practical to define
quasi central$^{\star}$mixing and $C^{\star}$ mixing.}{\Large\par}

{\Large{}Following is the definition of quasi central$^{\star}$ mixing:}{\Large\par}
\begin{thm}
{\Large{}The measure preserving dynamical system$(X,B,\mu,T)$ is
quasi central$^{\star}$ mixing if for any $A,B\in\mathcal{B}$ with
$\mu\left(A\right)\mu\left(B\right)>0$ and any $\varepsilon>0$,
the set $\left\{ n\in\mathbb{N}:\mid\mu\left(A\cap T^{-n}B\right)-\mu\left(A\right)\mu\left(B\right)\mid<\varepsilon\right\} $
is a quasi central$^{\star}$ set.}{\Large\par}
\end{thm}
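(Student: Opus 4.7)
The statement as written is in substance a definition, not a theorem — the sentence immediately preceding it explicitly announces ``Following is the definition of quasi central$^{\star}$ mixing'' and the body uses ``if'' in the definitional sense. Accordingly the only thing to verify is that this notion is the correct specialization of the framework developed in Section 5 to the family of piecewise syndetic sets, and that all the hypotheses imposed on $\mathcal{F}$ throughout Sections 4--5 are met by this particular family.

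The plan is short. First I would recall from the discussion following Definition 2.3 that a quasi central set is, by definition, an essential $\mathcal{PS}$ set, where $\mathcal{PS}$ is the family of piecewise syndetic sets; dualizing gives that a quasi central$^{\star}$ set is exactly an essential $\mathcal{PS}^{\star}$ set. Next I would invoke Theorem 2.1 together with the standard facts that $\mathcal{PS}$ is translation-invariant and has the Ramsey property, so $\beta(\mathcal{PS})$ is a closed left ideal of $\beta\mathbb{N}$; being a compact right-topological subsemigroup it contains an idempotent, which makes the class of essential $\mathcal{PS}$ sets — equivalently, quasi central sets — nonvacuous. Substituting these identifications into Theorem 5.4 (the definition of essential $\mathcal{F}^{\star}$ mixing) with $\mathcal{F}=\mathcal{PS}$ then yields \emph{verbatim} the condition stated here, so the statement holds by unwinding definitions.

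The one compatibility check worth performing is that $\mathcal{PS}$ satisfies the blanket hypothesis imposed at the start of Section 4, namely dilation-invariance in addition to translation-invariance; this is classical, since multiplying a piecewise syndetic set by a positive integer preserves piecewise syndeticity. Consequently Lemmas 4.9, 4.10, and 5.8 all apply with $\mathcal{F}=\mathcal{PS}$. The substantive payoff — and the natural theorem accompanying the present definition — is then the all-order analogue: if $(X,\mathcal{B},\mu,T)$ is quasi central$^{\star}$ mixing, then for any $A_{0},A_{1},\ldots,A_{k}\in\mathcal{B}$ of positive measure, any $n_{1}<\cdots<n_{k}$ in $\mathbb{N}$, and any $\varepsilon>0$, the multiple-return set is quasi central$^{\star}$. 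This falls out by applying Theorem 5.9 with $\mathcal{F}=\mathcal{PS}$, using the Schnell lemma (Lemma 5.7) and the $T^{n}$-persistence lemma (Lemma 5.8) as the only nontrivial inputs. The main obstacle, as in Theorem 5.9, is the passage from the strong-mixing-type inner-product decay to weak $p$-convergence via Lemma 5.7; but once $\mathcal{F}=\mathcal{PS}$ is fixed, no step requires more than was already used in the essential $\mathcal{F}^{\star}$ case.
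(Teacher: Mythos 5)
You are right that this is a definition mislabeled as a theorem: the paper itself introduces it with ``Following is the definition of quasi central$^{\star}$ mixing'' and supplies no proof, so there is nothing to verify beyond the identification of quasi central$^{\star}$ sets with essential $\mathcal{PS}^{\star}$ sets, which is exactly what you do. Your treatment matches the paper's (non-)treatment, and your added compatibility checks (translation- and dilation-invariance of $\mathcal{PS}$, existence of essential idempotents in $\beta(\mathcal{PS})$) are correct and, if anything, more careful than the source.
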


{\Large{}Another definition:}{\Large\par}
\begin{thm}
{\Large{}The measure preserving dynamical system$(X,B,\mu,T)$ is
$C^{\star}$ mixing if for any $A,B\in\mathcal{B}$ with $\mu\left(A\right)\mu\left(B\right)>0$
and any $\varepsilon>0$, the set 
\[
\left\{ n\in\mathbb{N}:\mid\mu\left(A\cap T^{-n}B\right)-\mu\left(A\right)\mu\left(B\right)\mid<\varepsilon\right\} 
\]
 is a $C^{\star}$ set.}{\Large\par}
\end{thm}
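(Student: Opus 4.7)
The plan is to prove the equivalence by induction on $k$. The backward direction ($\Leftarrow$) is immediate since the case $k=1$ is literally the definition of essential $\mathcal{F}^{\star}$ mixing; consequently the entire content of the theorem lies in the forward direction, for which the base case $k=1$ is the hypothesis and the case $k=2$ has just been established above. The induction will proceed from $k$ to $k+1$ following exactly the pattern of that $k=2$ computation.

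Fix an arbitrary essential idempotent $p\in\beta(\mathcal{F})$ and sets $A_0,A_1,\ldots,A_{k+1}$ of positive measure with $n_1<\cdots<n_{k+1}$. Showing that the target set is essential $\mathcal{F}^{\star}$ reduces to proving
\[
p\text{-}\lim_n \mu\bigl(A_0\cap T^{-nn_1}A_1\cap\cdots\cap T^{-nn_{k+1}}A_{k+1}\bigr)=\prod_{i=0}^{k+1}\mu(A_i).
\]
I would set $a_n(x)=\prod_{i=1}^{k+1}1_{A_i}(T^{nn_i}x)-\prod_{i=1}^{k+1}\mu(A_i)$, so that pairing against $1_{A_0}$ in $L^2(X)$ reduces the goal to $p\text{-}\lim_n a_n=0$ in the weak topology. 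Lemma 5.8 then further reduces this to showing $p\text{-}\lim_n\langle a_{n+m},a_n\rangle=0$ for every $m\in\mathbb{N}$.

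The crux is a change of variables $x\mapsto T^{-nn_1}x$ applied to the expanded inner product, using $T$-invariance of $\mu$. After this substitution the pair $1_{A_1}(T^{(n+m)n_1}\cdot)\,1_{A_1}(T^{nn_1}\cdot)$ collapses to the $n$-independent function $1_{A_1}(T^{mn_1}\cdot)\,1_{A_1}(\cdot)$, while for each $i=2,\ldots,k+1$ the corresponding pair becomes $\phi_{i,m}(T^{n(n_i-n_1)}\cdot)$ with $\phi_{i,m}=1_{T^{-mn_i}A_i\cap A_i}$. Thus, writing $B_0^{(m)}=T^{-mn_1}A_1\cap A_1$ and $B_i^{(m)}=T^{-mn_i}A_i\cap A_i$, the expanded inner product (modulo the subtracted constant $\prod_{i=1}^{k+1}\mu(A_i)^2$) becomes a $k$-fold correlation with strictly increasing positive shifts $n_2-n_1<\cdots<n_{k+1}-n_1$, and the inductive hypothesis identifies
\[
p\text{-}\lim_n \mu\Bigl(B_0^{(m)}\cap\bigcap_{i=2}^{k+1}T^{-n(n_i-n_1)}B_i^{(m)}\Bigr)=\prod_{i=1}^{k+1}\mu\bigl(T^{-mn_i}A_i\cap A_i\bigr),
\]
the degenerate case where some $\mu(B_i^{(m)})=0$ being trivial.

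To close, I would take $p\text{-}\lim_m$ of this identity: by Lemma 5.9 each $T^{n_i}$ is itself essential $\mathcal{F}^{\star}$ mixing, so the hypothesis applied to the pair $(A_i,A_i)$ under $T^{n_i}$ yields $p\text{-}\lim_m\mu(T^{-mn_i}A_i\cap A_i)=\mu(A_i)^2$. The iterated limit therefore equals $\prod_{i=1}^{k+1}\mu(A_i)^2$, cancelling the subtracted constant and delivering the required vanishing. I expect the main obstacle to be the bookkeeping in the change of variables, namely tracking which exponents depend on $n$, on $m$, or on both, and verifying that the shifts $n_i-n_1$ form a strictly increasing sequence of positive integers so the inductive hypothesis applies without modification; a secondary care point is ensuring that the iterated $p\text{-}\lim_m p\text{-}\lim_n$ correctly produces the single $p\text{-}\lim_n\langle a_{n+m},a_n\rangle$ required by Lemma 5.8, which is where the idempotency of $p$ enters.
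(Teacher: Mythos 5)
The statement you were asked about is not, in the paper's own logic, a theorem: it is the \emph{definition} of $C^{\star}$ mixing. In the source it is introduced by the phrase ``Another definition:'', it is worded with ``if'' rather than ``iff'' in the usual definitional manner, and no proof accompanies it --- nor could one, since ``$C^{\star}$ mixing'' has no prior meaning against which the displayed condition could be verified. (It is typeset in a theorem environment only because the paper uses that environment indiscriminately; the companion statement defining quasi central$^{\star}$ mixing is presented the same way.) There is therefore nothing here for a proof attempt to establish, and a correct response would have been to observe exactly that.

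Your proposal does not engage with this statement at all. What you have written is an inductive argument for a different result, namely the all-order essential $\mathcal{F}^{\star}$ mixing theorem of Section 5 (the one involving $A_{0},\dots,A_{k}$ and the exponents $nn_{1},\dots,nn_{k}$), with its van der Corput reduction via Lemma 5.8 and the passage to $T^{n_{i}}$ via Lemma 5.9. As a sketch of \emph{that} theorem your outline is sensible --- indeed more complete than the paper's own treatment, which only carries out the $k=2$ computation and never sets up the induction, and your closing worry about whether the iterated limit $p\text{-}\lim_{m}p\text{-}\lim_{n}\langle a_{n+m},a_{n}\rangle=0$ really delivers the hypothesis of Lemma 5.8 (which as stated requires $p\text{-}\lim_{n}\langle x_{n+d},x_{n}\rangle=0$ for each \emph{fixed} $d$) points at a gap the paper itself glosses over. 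But none of that bears on the statement under review, which is a definition and requires no proof.
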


{\Large{}We know that, central set $\Rightarrow$ quasi central set
$\Rightarrow$ $D$ set, so $D^{\star}$ set $\Rightarrow$ quasi
central$^{\star}$ set $\Rightarrow$central$^{\star}$set. From theorem
5.1 and theorem 5.2 we get quasi central$^{\star}$ mixing is nothing
but weak mixing. Further we know that $IP^{\star}$ set $\Rightarrow$
$C^{\star}$ set $\Rightarrow$ central$^{\star}$ set. From theorem
5.3 and theorem 5.1, we get mild mixing $\Rightarrow$$C^{\star}$
mixing $\Rightarrow$ weak mixing. We don't know whether $C^{\star}$
mixing is strictly intermediate between mild mixing and weak mixing.}{\Large\par}

\textbf{\Large{}Acknowledges:}{\Large{} The second author acknowledges
the grant UGC-NET SRF fellowship with id no. 421333 of CSIR-UGC NET
December 2016.}{\Large\par}

\end{document}